\documentclass[11pt]{amsart}

\usepackage[T1]{fontenc}
\usepackage{lmodern}
\usepackage[a4paper,margin=1.05in]{geometry}
\usepackage{amsmath,amssymb,amsthm,mathtools}
\usepackage{booktabs}
\usepackage{tabularx}
\usepackage{xcolor}
\usepackage{hyperref}

\hypersetup{
  colorlinks=true,
  linkcolor=blue!55!black,
  citecolor=red!40!black,
  urlcolor=blue!65!black
}

\numberwithin{equation}{section}

\theoremstyle{plain}
\newtheorem{theorem}{Theorem}[section]
\newtheorem{proposition}[theorem]{Proposition}
\newtheorem{lemma}[theorem]{Lemma}
\newtheorem{corollary}[theorem]{Corollary}

\theoremstyle{remark}
\newtheorem*{remark}{Remark}
   
\newcommand{\diver}{\operatorname{div}}
\newcommand{\curl}{\operatorname{curl}}
\newcommand{\supp}{\operatorname{supp}}
\newcommand{\cJ}{\mathcal J}
\newcommand{\norm}[2]{\left\lVert #1\right\rVert_{#2}}

\usepackage{nicefrac}
\newcommand*{\bydef}{\overset{\rm def}{=}}
\usepackage{todonotes}  

\title[EM systems: between well-posedness and  ill-posedness]
{Charge-Free Well-Posedness and Charge-Driven Norm Inflation 
for the Euler--Maxwell System}

\author{Haroune Houamed}
\address{Ko\c{c} University, Istanbul, Turkey}
\email{hhouamed@ku.edu.tr; haroune.houamed@nyu.edu}

\keywords{Euler--Maxwell system, Yudovich solutions, endpoint regularity,
time-like trace estimate, norm inflation}

\begin{document}

\begin{abstract}
We study the two-dimensional incompressible Euler--Maxwell system under the normal geometry in which the velocity and electric field are planar and the magnetic field is normal to the plane. We identify a structural distinction between the projected, charge-free formulation and the unprojected system. 

For the projected system, we establish a local well-posedness theory for initial velocities in $H^1(\mathbb R^2)$ with bounded initial vorticity and electromagnetic data in $H^{\nicefrac 32}(\mathbb R^2)$, under a strict sub-luminal condition on the velocity. The main ingredient is an endpoint time-like trace estimate for half-waves, which controls the magnetic gradient in $L_t^2$ along every fluid trajectory. This closes the Yudovich vorticity estimate without requiring an Eulerian Lipschitz bound on the electromagnetic field. 

For the unprojected system, the longitudinal electric mode produces an additional vorticity source term containing the charge density ``$\diver  E$''. In this case, we construct smooth, compactly supported radial electromagnetic data converging to zero in $H^{\nicefrac32}(\mathbb R^2)$, with zero initial velocity, whose unique global smooth solution exhibits a   vorticity norm inflation in $L^\infty_x$. Thus,  at the same electromagnetic Sobolev regularity, the Gauss constraint separates endpoint regularity propagation from charge-driven vorticity norm inflation.
\end{abstract}

\maketitle

\setcounter{tocdepth}{1}
\tableofcontents

\section{Introduction}

\subsection*{Purpose} 
The incompressible Euler--Maxwell system describes the interaction of an ideal conducting fluid with an electromagnetic field \cite{Buskmap93,Davidson01}. Even in two space dimensions, this interaction places two very different dynamics in  direct competition. The fluid vorticity is transported by a velocity field (of Yudovich regularity, say), whereas the electromagnetic variables propagate at the finite speed of light, and are damped through Ohm's law. In the absence of the electromagnetic field, a bounded vorticity is propagated by the classical Yudovich theory \cite{Yudovich}. However, once the Lorentz force is present, the vorticity is no longer conserved, and the central question becomes whether the Maxwell evolution supplies enough spacetime control to bound the vorticity's source terms.

This paper identifies a structural distinction in that question. At the same electromagnetic regularity $H^{\nicefrac32}({\mathbb R}^2)$, we prove an endpoint well-posedness for the charge-free projected formulation, and vorticity norm inflation for the charged unprojected formulation (see the next section for the precise setup). These conclusions are not two competing statements about a single Cauchy problem. They, however, occur on opposite sides of the Gauss constraint: the charge-free projection eliminates the longitudinal electric mode, while the charged system retains a non-propagating mode that can transfer a highly concentrated charge directly into the fluid vorticity. Thus, the distinction between the two theories is not one of Sobolev versus Besov summability, it is, more precisely, encoded in the formulation of Ohm's law and in the constraint imposed on the electric field. To the best of our knowledge, this is the first work that precisely identifies a rigorous distinction in the theory of the two models, below.

\subsection*{The two formulations}
We work on ${\mathbb R}^2$, although the fluid and electromagnetic fields take values
in ${\mathbb R}^3$.  Let $c>0$ denote the speed of light and $\sigma>0$ the electrical
conductivity.  The charge-free projected system is given by 
\begin{equation}\label{eq:EM-intro} \tag{EM-$\mathbb P$}
\left|~
\begin{aligned}
\text{\tiny(Euler equation)} &&&\partial_tu+u\cdot\nabla u +\nabla p=j\times B,
 \qquad  &\diver u=0 &,\\
 \text{\tiny(Amp\`ere  equation)}&&&\tfrac1c\partial_tE-\nabla\times B=-j,
 \qquad  &\diver E=0 &,\\
\text{\tiny(Faraday equation)}&&& \tfrac1c\partial_tB+\nabla\times E=0,
 \qquad  &\diver B=0 &,\\
 \text{\tiny(solenoidal Ohm law)} & & &j=\sigma\bigl(cE+\mathbb P(u\times B)\bigr),
\qquad  &\diver j=0 &,
\end{aligned}
\right.
\end{equation}
where $\mathbb P=\operatorname{Id}-\nabla\Delta^{-1}\diver$ is the Leray projector on divergence-free vector fields, while the charged unprojected system is instead formulated as
\begin{equation}\label{eq:unprojected-system}\tag{EM}
\left| ~
\begin{aligned}
 \text{\tiny(Euler equation)} &&&\partial_tu+u\cdot\nabla u+\nabla p=j\times B,
 \qquad &\diver u=0&,\\
\text{\tiny(Amp\`ere  equation)}&&& \tfrac1c\partial_tE-\nabla\times B=-j,\\
\text{\tiny(Faraday equation)}&&& \tfrac1c\partial_tB+\nabla\times E=0,
 \qquad &\diver B=0&,\\
\text{\tiny(compressible Ohm law)} &&& j=\sigma(cE+u\times B).
\end{aligned}
\right.
\end{equation}

Introducing the charge density $\rho\bydef \diver E$,  and taking the divergence of Amp\`ere's equation gives the corresponding continuity law
\begin{equation*} 
 \tfrac1c\partial_t\rho+\diver j=0.
\end{equation*}
Consequently, setting $\rho=0$ is dynamically compatible precisely when the current is divergence-free, i.e., the projection in \eqref{eq:EM-intro} enforces this compatibility. 

Throughout the paper, we use the word ``\emph{charged}'' as a shorthand for allowing $\diver E\ne0$ in \eqref{eq:unprojected-system}. We also impose the normal two-dimensional geometry
\begin{equation}\label{eq:normal-intro}
 u=(u_1,u_2,0),\qquad E=(E_1,E_2,0),\qquad B=(0,0,b),
\end{equation}
which is a structure that is propagated by both systems as soon as it is assumed to hold at the initial time $t=0$.  Under this geometry condition, it is readily seen that 
\begin{equation}\label{eq:vorticity-dichotomy}
 \curl(j\times b e_3)
 =-\diver(bj)
 =-j\cdot\nabla b-b\,\diver j.
\end{equation}
For \eqref{eq:EM-intro}, the last term vanishes and the vorticity
$$\omega \bydef \partial_1u_2-\partial_2u_1$$
 is then governed by the forced transport equation 
\begin{equation}\label{eq:vorticity-projected}
 (\partial_t+u\cdot\nabla)\omega=-j\cdot\nabla b.
\end{equation}
For \eqref{eq:unprojected-system}, however, the additional term $-b\,\diver j$ remains.  It is to be emphasized later on that formula \eqref{eq:vorticity-dichotomy} is the algebraic source of the well-posedness/ill-posedness dichotomy established, below, for \eqref{eq:EM-intro} and \eqref{eq:unprojected-system}, respectively.

These two models can be obtained from the corresponding viscous Navier--Stokes--Maxwell system through the vanishing viscosity limit, while the latter  had been rigorously derived in \cite{ASR19} from Vlasov--Maxwell--Boltzmann system via hydrodynamic limits. 

\section{Statements and discussion of the main results} We now state the main theorems of the paper, and provide a concise discussion about their relationship and proof strategies. The first statement is about the local well-posedness of the charge-free Euler--Maxwell system \eqref{eq:EM-intro}.

\begin{theorem}[Charge-free EM: local well-posedness]\label{thm:main}
Let  $c,\sigma>0$ be fixed, and consider    initial data enjoying the normal structure
\eqref{eq:normal-intro}, with $\diver u_0=\diver E_0=0,$ such that 
\begin{equation*} 
 u_0\in H^1({\mathbb R}^2), \qquad
 \omega_0=\operatorname{curl}_2u_0\in L^\infty({\mathbb R}^2),
\end{equation*}
\begin{equation*} 
 (E_0,B_0)\in H^{\frac{3}{2}}(\mathbb R^2) .
\end{equation*}
We assume further that there is a strict sub-luminal gap in the sense that 
\begin{equation*} 
 \norm{u_0}{L^\infty}\leq c-\delta_0,
 \qquad\text{for some }\delta_0>0.
\end{equation*}
Then, there exists $T=T\left(c,\sigma,\delta_0,
 \norm{u_0}{H^1},\norm{\omega_0}{L^\infty},
 \norm{(E_0,B_0)}{H^{3/2}}\right)>0$
and a unique solution of \eqref{eq:EM-intro} on $[0,T]$ such that
\begin{equation*}
	u\in C([0,T];H^1(\mathbb R^2)), 
	\qquad
 \omega\in L^\infty([0,T];L^2\cap L^\infty(\mathbb R^2)),
\end{equation*}
\begin{equation*}
	(E,B)\in C([0,T]; H^{\frac32}(\mathbb R^2)).
\end{equation*}
Moreover, the velocity field remains sub-luminal, i.e., 
\begin{equation}\label{eq:gap-preserved}
 \sup_{0\leq t\leq T}\norm{u(t)}{L^\infty}
 \leq c-\frac{\delta_0}{2},
\end{equation}
the energy inequality
\begin{equation*} 
 \norm{(u,E,B)(t)}{L^2(\mathbb R^2)}^2
 +\frac2\sigma\int_0^t\norm{j(s)}{L^2(\mathbb R^2)}^2d  s
 \leq \norm{(u_0,E_0,B_0)}{L^2(\mathbb R^2)}^2
\end{equation*}
holds, and, for every absolutely continuous curve $X:[0,T]\to\mathbb R^2$ with
$\|\dot X\|_{L^\infty}\leq c-\delta_0/2$ one has that 
\begin{equation}\label{eq:trace-main}
 \norm{\nabla b(t,X(t))}{L^2(0,T)}
 \lesssim _c \delta_0^{-\frac12} 
  \norm{(E_0,B_0)}{H^{\frac32}(\mathbb R^2)}
   .
\end{equation}
\end{theorem}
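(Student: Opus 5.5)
The proof runs through an a priori estimate scheme closed by a bootstrap on $[0,T]$, followed by approximation (regularize the data, solve the smooth problem, pass to the limit) and a separate uniqueness argument.

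\textbf{Linear electromagnetic part.} In the normal geometry, $b$ solves a damped wave equation
\[
\tfrac1{c^2}\partial_t^2 b+\tfrac{\sigma}{c}\partial_t b-\Delta b=\tfrac{\sigma}{c}\,\text{(forcing from }\curl\mathbb P(u\times B)=\curl(u b)\text{)}.
\]
Equivalently, we treat $(E,B)$ as the Maxwell system $\tfrac1c\partial_t(E,B)+\mathcal M(E,B)=-\sigma(cE+\mathbb P(u b e_3^\perp),0)$ with skew $\mathcal M$. The energy estimate gives the $L^2$ energy inequality by pairing the Euler equation with $u$ and Maxwell with $(E,B)$. The terms $\int (j\times B)\cdot u$ and $-\int j\cdot E$ combine into $-\tfrac1\sigma\|j\|_{L^2}^2$, using $\int \mathbb P(u\times B)\cdot j=\int (u\times B)\cdot j$ since $\diver j=0$.

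For $H^{3/2}$, I use Duhamel with the half-wave propagators $e^{\pm ict|D|}$ and damping. The forcing $\sigma\,\mathbb P(u\times B)$ is estimated in $H^{3/2}$ by a product/commutator bound. Here $\|ub\|_{H^{3/2}}$ needs $\nabla u\in L^\infty$, which we lack, so I would instead use the dissipative structure. The $\sigma c E$ damping lets me put the forcing in $L^2_tH^{3/2}$, and $\|u b\|_{\dot H^{3/2}}\lesssim \|u\|_{L^\infty}\|b\|_{\dot H^{3/2}}+\|\nabla u\|_{L^{p}}\|\nabla b\|_{L^{q}}+\ldots$. Yudovich gives $\|\nabla u\|_{L^p}\lesssim p\|\omega\|_{L^2\cap L^\infty}$, combined with $H^{3/2}\subset W^{1,q}$ for every $q<\infty$. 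Sub-luminality enters through the leading term: $\|u\|_{L^\infty}<c$ makes the transport part of the forcing perturbative relative to the propagation speed.

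\textbf{Time-like trace estimate \eqref{eq:trace-main}.} This is the key step. For a free half-wave $e^{it|D|}f$ and a curve with $|\dot X|\le c-\delta$, write $\nabla b(t,X(t))=\int \hat g(\xi)e^{i(tc|\xi|+X(t)\cdot\xi)}d\xi$. Squaring and integrating in $t$ gives a bilinear form with kernel $\int_0^T e^{i(t c(|\xi|-|\eta|)+X(t)\cdot(\xi-\eta))}dt$. The phase derivative satisfies $c(|\xi|-|\eta|)+\dot X\cdot(\xi-\eta)\ge \delta\,\big||\xi|-|\eta|\big|$ on suitable angular pieces, or else is comparable to $|\xi-\eta|$. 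A $TT^*$ argument on each dyadic shell $|\xi|\sim 2^k$ yields $\|P_k\nabla b(\cdot,X(\cdot))\|_{L^2_t}^2\lesssim \delta^{-1}2^{k}\cdot 2^{2k}\|P_kf\|_{L^2}^2$, i.e. $\dot H^{3/2}$ per shell. Summing over $k$ without losing a log is the endpoint obstacle. I would avoid the dyadic decomposition altogether by a Lorentz boost/energy-flux identity: integrate the conserved wave energy density $\tfrac12(|\partial_t\nabla b|^2/c^2+|\nabla^2 b|^2)$ (for $\nabla b$, one derivative up, after applying $|D|^{1/2}$) over the region on one side of the time-like surface $\{x_1=X_1(t)\}$ swept in a direction, and use that the flux across a surface of speed $\le c-\delta$ is coercive with constant $\sim\delta$. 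Concretely, use the multiplier identity for the time-like cylinder $\{(t,x):x=X(t)+y,\ y\in\mathbb R^2\}$ averaged over translates. The line-trace version follows from the flux bound for the family of hyperplanes $x\cdot\theta=X(t)\cdot\theta$, combined with a 1D trace in the tangential variable, costing $1/2$ derivative. Together this gives exactly $H^{3/2}$ and $\delta^{-1/2}$. Damping and Duhamel forcing are then added via Minkowski in time.

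\textbf{Closing and uniqueness.} Along characteristics of \eqref{eq:vorticity-projected}, $\|\omega(t)\|_{L^\infty}\le\|\omega_0\|_{L^\infty}+\sup_X\int_0^t|j||\nabla b|(s,X(s))ds$. By Cauchy--Schwarz this is at most $\|j\|_{L^2_tL^\infty}\sup_X\|\nabla b(\cdot,X)\|_{L^2_t}$. Here $\|j\|_{L^\infty}\lesssim\|E\|_{H^{3/2}}+\|u\|_{L^\infty}\|b\|_{L^\infty}+$ a Riesz-transform log loss, handled by $\mathbb P$ bounded on $L^\infty$ intersected with a $C^{1/2}$ bound from $H^{3/2}$. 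The $L^2$ norm of $\omega$ is analogous. The gap \eqref{eq:gap-preserved} follows from $\|u(t)-u_0\|_{L^\infty}\lesssim\int_0^t\|\partial_tu\|_{L^\infty}$, or by continuity for small $T$, using $\|u\|_{L^\infty}\lesssim\|u\|_{L^2}^{1/2}\ldots$ via $H^1\cap\{\omega\in L^\infty\}$. This is the log-Lipschitz interpolation $\|u\|_{L^\infty}\lesssim\|u\|_{L^2}^{1/2}\|\omega\|_{L^\infty}^{1/2}$. Since the fluid trajectories then have speed $\le c-\delta_0/2$, the trace estimate applies and the bootstrap closes for small $T$. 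Uniqueness follows by the Yudovich $L^2$ energy method on differences, with the Osgood argument. The electromagnetic differences are measured in $L^2$, and the difference term $\int \delta j\, B\cdot \delta u$ is controlled by $\|B\|_{L^\infty}$.
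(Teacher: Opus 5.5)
Your proposal has two genuine gaps, and the first is at the step you yourself call the key one. The time-like trace estimate is not established. The $TT^*$/dyadic route only sums to a $\dot B^{3/2}_{2,1}$ bound, as you note. The replacement rests on a false claim: the wave energy flux through a time-like surface (normal speed $|V_n|<c$) is \emph{not} coercive. For the stationary line $\{x_1=0\}$ its density is $-c^2\partial_t v\,\partial_1 v$, which has no sign; coercivity holds only for space-like surfaces ($|V_n|>c$). In addition, a Lorentz boost only straightens constant-velocity trajectories. The missing idea is an angular decomposition in frequency. Writing $\xi=r\theta$,
\[
\nabla m(D)e^{\pm ict|D|}f(X(t))=C\int_{\mathbb S^1}g_\theta\bigl(\phi_\theta(t)\bigr)\,d\theta,\qquad g_\theta(s)=\int_0^\infty e^{irs}\,ir^2\theta\, m(r\theta)\widehat f(r\theta)\,dr,
\]
with $\phi_\theta(t)=\pm ct+X(t)\cdot\theta$. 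Each angular piece is a plane wave, so along the curve it only sees the scalar phase $\phi_\theta$, and sub-luminality gives $|\phi_\theta'|\ge\delta$. A change of variables and 1D Plancherel yield $\|g_\theta\circ\phi_\theta\|_{L^2(I)}^2\le\delta^{-1}\|g_\theta\|_{L^2(\mathbb R)}^2\lesssim\delta^{-1}\int_0^\infty r^4|\widehat f(r\theta)|^2\,dr$. Minkowski in $\theta$ and Cauchy--Schwarz on $\mathbb S^1$ then give exactly $\delta^{-1/2}\|f\|_{\dot H^{3/2}}$, since $d\xi=r\,dr\,d\theta$. All frequency scales along a ray are handled at once by Plancherel, so no dyadic summation ever arises. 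Damping is handled by putting the whole current $j$, including $\sigma cE$, into the forcing of the undamped half-waves $w_\pm=|D|^{-1}\operatorname{curl}_2E\pm ib$, then using Minkowski in the Duhamel time.

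The second gap is the $H^{3/2}$ propagation of $(E,b)$. No bound $\|ub\|_{\dot H^{3/2}}\lesssim\|u\|_{L^\infty}\|b\|_{\dot H^{3/2}}+\|\nabla u\|_{L^p}\|\nabla b\|_{L^q}+\dots$ can hold in the Yudovich class, because the term hidden in the dots carries $|D|^{3/2}u$. For a vortex patch with $b\equiv1$ near its boundary, $\nabla u$ jumps across the boundary, so $ub\notin\dot H^{3/2}$, while your displayed right-hand side is finite. What closes the estimate is the normal-structure cancellation $\mathbb P(u\times be_3)=-\mathbb P(bu^\perp)=-[\mathbb P,b]u^\perp$, which holds because $u^\perp$ is a gradient. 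Equivalently, since this field is divergence-free, its $\dot H^{3/2}$ norm equals $\|\operatorname{curl}_2(u\times be_3)\|_{\dot H^{1/2}}=\|u\cdot\nabla b\|_{\dot H^{1/2}}$. No derivative falls on $u$ thanks to $\operatorname{div}u=0$, and paraproducts bound this by $\|u\|_{L^\infty\cap\dot H^1}\|b\|_{\dot H^{3/2}}$. Your own damped wave equation for $b$ has forcing proportional to $u\cdot\nabla b$ (not $\operatorname{curl}(ub)$), so computing it correctly would have given you this. Sub-luminality plays no role in this estimate; it enters only through the trace bound. Without the product law you control neither $\sup_t\|(E,b)\|_{H^{3/2}}$ nor $j\in L^\infty_tH^{3/2}\subset L^\infty_{t,x}$. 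Both the Duhamel trace bound and the Cauchy--Schwarz vorticity estimate depend on these.
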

 
 \begin{remark}
 	It is to be emphasized later on in the proof of Theorem \ref{thm:main} that the data-to-solution map is continuous at zero-initial-data in the sense that 
 	\begin{equation*}
 		\norm {(u,\omega,E,B)}{L^\infty ([0,T];\dot H^1\times L^\infty\times H^\frac{3}{2}\times H^\frac{3}{2})} \lesssim \norm {(u_0,\omega_0,E_0,B_0)}{ \dot H^1\times L^\infty\times H^\frac{3}{2}\times H^\frac{3}{2} }.
 	\end{equation*}
 	
 \end{remark}

Our second main result concerns the charged system
\eqref{eq:unprojected-system}. In contrast with Theorem \ref{thm:main} for \eqref{eq:EM-intro}, we now construct a sequence of smooth, compactly
supported, normally structured data that generate a unique global smooth solution which exhibits a norm inflation phenomenon for the vorticity in the $L^\infty_x$ space, while the reference regularity for the electromagnetic field remains at the $\nicefrac 32$-Sobolev space. The precise statement of this is the following.

\begin{theorem}[Charged EM: norm inflation]\label{thm:main:2}
Let  $c,\sigma>0$ be fixed.  There are constants $C_0,c_*>0$ and, for every sufficiently
large integer $N$, smooth compactly supported normally structured initial data enjoying the normal structure \eqref{eq:normal-intro}
\begin{equation*}
	u_{0,N}=0,\qquad E_{0,N}:\mathbb R^2\to\mathbb R^2,
 \qquad B_{0,N}=b_{0,N}e_3,
\end{equation*}
such that
\begin{equation*} 
 \norm{E_{0,N}}{H^{3/2}(\mathbb R^2)}
 +\norm{B_{0,N}}{H^{3/2}(\mathbb R^2)}
 \leq C_0N^{-\frac 18},
\end{equation*}
while the unique corresponding solution of \eqref{eq:unprojected-system} is global and smooth, but at the observation time $t_N=N^{-\nicefrac18}$
 the vorticity   associated to this solution enjoys the bound 
\begin{equation}\label{eq:inflation}
 \norm{\omega_N(t_N)}{L^\infty(\mathbb R^2)}
 \geq c_*N^{\frac 18}.
\end{equation}
\end{theorem}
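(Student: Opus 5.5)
Since $u_{0,N}=0$, the vorticity equation for \eqref{eq:unprojected-system} reads $(\partial_t+u\cdot\nabla)\omega=-j\cdot\nabla b-b\,\diver j$. We will use radial data to make the fluid passive at leading order. We take $E_{0,N}=\nabla\phi_N$ radial, a pure longitudinal (charged) mode, and $b_{0,N}$ radial and nonzero where the charge concentrates. For radial data the electromagnetic part splits into a transverse wave part (Faraday/Ampère coupling $\curl E$ with $b$) and a longitudinal part. With $u\equiv0$ the longitudinal part is governed by Ohm's law alone: $\tfrac1c\partial_t\rho=-\sigma c\rho$, so $\rho$ decays exponentially and does not propagate.

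The plan has four steps.

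First, we establish global smooth existence. The system is semilinear in $(E,B)$ and Euler-type in $u$ in two dimensions. The energy inequality together with a Beale--Kato--Majda type argument gives global smooth solutions for smooth compactly supported data. For small data, alternatively, a direct continuation argument works, with $\omega$ transported and the forcing kept bounded in $C^k$ on each finite interval. This step is standard.

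Second, we construct the data. Choose a profile with $\rho_{0,N}=\diver E_{0,N}$ concentrated at frequency $N$ on a unit-sized region, with amplitude tuned so that $\|E_{0,N}\|_{H^{3/2}}\sim N^{-1/8}$. This requires $\|\rho_{0,N}\|_{H^{1/2}}\sim N^{-1/8}$, which permits $\|\rho_{0,N}\|_{L^\infty}$ of size about $N^{1/2-1/8}$, or larger if the support is concentrated. Take $b_{0,N}$ smooth, $N$-independent in shape, with size $N^{-1/8}$ and equal to a nonzero constant near the support of the charge.

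Third, we obtain a Duhamel lower bound at $x_*$. Since $u_0=0$ and $u$ stays small on $[0,t_N]$, we write
\[
\omega_N(t_N,x_*)\approx-\int_0^{t_N}\bigl(b\,\diver j+j\cdot\nabla b\bigr)(s,x_*)\,ds,
\qquad
\diver j=\sigma c\rho+\sigma\diver(u\times B).
\]
The leading term is $\sigma c\int_0^{t_N}b\rho\,ds\approx\sigma c\,t_N\,b_0(x_*)\rho_0(x_*)$, because $\rho$ changes only by the factor $e^{-\sigma c^2 s}$ and $b$ changes by $O(t_N\|\partial_t b\|)$. We choose the amplitudes so that $t_N\|b_0\rho_0\|_{L^\infty}\sim N^{1/8}$. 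With $t_N=N^{-1/8}$ this needs $\|b_0\rho_0\|_{L^\infty}\sim N^{1/4}$. This is compatible with the $H^{3/2}$ constraint through a logarithmic or power-concentrated radial profile for $\rho_0$: in two dimensions $H^{1/2}$ does not control $L^\infty$, so a bump of height $h$ and width $N^{-1}$ costs only $hN^{-1/2}$ in $H^{1/2}$.

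Fourth, and hardest, we control the error terms. These are $j\cdot\nabla b$, $b\,\sigma\diver(u\times B)$, transport by $u$, and the time variation of $b$ under the wave dynamics. The oscillatory transverse part of $E$ can feed $\nabla b$ at size up to $N^{1/2}$ times small amplitudes. The plan is to take $\curl E_0=0$ and $b_0$ smooth at scale one, so that the wave part stays smooth and of size $N^{-1/8}$ in $C^1$ over $[0,t_N]$. We also show that $u$ generated on this interval satisfies $\|u\|_{W^{1,\infty}}\ll1$, so that the transport and $u\times B$ terms are lower order. I expect the delicate bookkeeping to be ensuring that $\|\nabla b\|_{L^\infty}\,\|j\|_{L^\infty}\,t_N$ is $o(N^{1/8})$. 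The risk is that $j$ contains $\sigma cE$ with $E$ longitudinal of size $\|\rho\|_{L^\infty}\cdot N^{-1}$; this needs to be checked against the chosen scaling.
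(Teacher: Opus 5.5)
Your data and mechanism are the paper's. You take a radial, curl-free electric field concentrated at scale $N^{-1}$ with $\diver E_{0,N}(0)\sim N^{3/8}$. This must come from a compactly supported potential so that the total charge vanishes; a bump $\rho_0$ with nonzero mean gives $E_0=\nabla\Delta^{-1}\rho_0\notin L^2(\mathbb R^2)$. You add a radial magnetic field of size $N^{-1/8}$, constant near the charge, and use the driving term $-b\,\diver j\approx-\sigma c\,b\,\diver E$, whose integral over $[0,t_N]$ is $\sim N^{1/8}$. Two steps fail as written, however.

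\textbf{Step 1.} Energy plus a Beale--Kato--Majda criterion does not give global smooth solutions of \eqref{eq:unprojected-system} for arbitrary smooth data. The energy controls neither $\int_0^T\|\omega\|_{L^\infty}dt$ nor the derivatives of $(E,B)$ in the forcing $-\diver(bj)$. Large-data global existence for two-dimensional Euler--Maxwell is not a standard result. Your data are also not small in the norms a continuation argument would need: $\|\diver E_{0,N}\|_{L^\infty}\sim N^{3/8}$ is exactly the quantity driving the inflation. The missing idea is the exact radial reduction of Lemma \ref{lem:radial-equations}. The system becomes triangular and linear: $(S,b)$ solves a closed damped Maxwell system, $(V,R)$ solves at each radius a linear ODE with coefficients depending only on $b$, and $u\cdot\nabla\omega\equiv0$. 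Global smoothness is then immediate (Lemma \ref{lem:global-radial}).

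\textbf{Step 4.} You plan to show $\|u\|_{W^{1,\infty}}\ll1$ on $[0,t_N]$ and use it to discard transport and the $u\times B$ contribution. That bound is false. Since $|\omega|\lesssim|\nabla u|$ and you are proving $\|\omega_N(t_N)\|_{L^\infty}\gtrsim N^{1/8}$, it cannot hold. In fact $\|\nabla u_N(t)\|_{L^\infty}\sim\sigma c\,\varepsilon_N\alpha_N N\,t$ reaches $N^{1/8}$ at $t_N$; only $\|u_N\|_{L^\infty}\lesssim N^{-7/8}$ is small.

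The error terms are harmless, but for other reasons:
\begin{itemize}
\item transport vanishes identically by symmetry, so your Duhamel formula at $x_*=0$ is exact;
\item on the magnetic plateau $\sigma b\,\diver(u\times B)=\sigma\varepsilon_N^2\omega$, which contributes only $O(N^{-1/4})$;
\item $j\cdot\nabla b$ vanishes identically on $[0,t_N]$.
\end{itemize}
Your proposal neither shows this nor carries out the bookkeeping it flags as delicate.

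The paper avoids perturbation theory altogether. Finite speed of propagation for $(S,b)$ (Corollary \ref{cor:plateau}) gives $b\equiv\varepsilon_N$ and $S\equiv0$ on $[0,t_N]\times\supp R_N(0,\cdot)$. Hence $(V,R)$ solve a constant-coefficient ODE with an explicit solution (Corollary \ref{cor:explicit}). This yields $\omega_N(t,0)=-2K_N(t)\alpha_N N$ exactly, from which both the lower bound and the smallness of $\|u_N\|_{L^\infty}$ follow.
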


\begin{remark}
It is to be emphasized later on that  the electric data are curl free but not divergence-free; more precisely,
\begin{equation}\label{eq:charge-size}
 \diver E_{0,N}(0)=2N^{\frac 38 }.
\end{equation}
Moreover,   up to the observation time $t_N$,  the   velocity  fields remain small in   $L^\infty_x$, i.e., 
\begin{equation}\label{eq:u-small}
 \sup_{0\leq t\leq t_N}\norm{u_N (t)}{L^\infty}
 \leq C_0N^{-\frac78}.
\end{equation} 
\end{remark}

\begin{remark}
	It is to be emphasized later on that we are able to prove a stronger statement for the ill-posedness of \eqref{eq:unprojected-system}. More precisely, the initial electromagnetic field that we will construct below can actually be made small in the $H^s$ topology, for any $s<2$, with a vanishing $H^s$-norm in the asymptotic limit $N\to \infty$, while the $L^\infty_x$-inflation of the vorticity still remains valid.  In fact, this could further be improved by including a logarithmic correction in the construction of the initial electromagnetic field to recover a similar instability statement for electromagnetic field in the endpoint space $H^2$, as well. We choose  not to include the discussion of this refinement in the proof below, which goes beyond the scope of this article. Nevertheless,  we believe that, in this sense, the $H^2$ topology could   be sharp. This is based on the fact that the ill-posedness mechanism we illustrate later on is driven by the $L^\infty_x$ inflation of $\diver E_N$ that appears in the source term of the vorticity, which in turn originates from a choice of an initial data that exhibits the failure of the two-dimensional  Sobolev space in  $\dot H^s$ in $ \dot W^{1,\infty} $ for $s\leq 2$.  
\end{remark}

\subsection*{Background and the endpoint obstruction}

The global Yudovich theory for the normally structured projected system \eqref{eq:EM-intro} was
developed by Ars\'enio and the author in \cite{ArsenioHouamedAPDE}.  The argument from that work
combines the energy dissipation of the current, a normal structure product
law, and damped Strichartz estimates for Maxwell's equations.  At the
endpoint, the electromagnetic data are controlled in a Besov space with
$\nicefrac74$ derivatives.  This index arises from the two-dimensional wave estimate
used to place a full spatial derivative of the magnetic field in an
Eulerian spacetime norm of the form $L^2_tL^\infty_x$.  There, the  $\ell^1$ Besov
summation is essential at the relevant wave endpoint, for the corresponding
homogeneous Sobolev estimate being false \cite{FangWang}. 

It is important to also stress that the normal structure imposed in \cite{ArsenioHouamedAPDE} seems to be necessary for the well-posedness theory of the Euler--Maxwell system in the Yudovich class of solutions. We refer to \cite{HouamedIllposedness} for a discussion about this point. Besides this, the particular formulation of the Euler--Maxwell system in the projected form \eqref{eq:EM-intro} was also studied in the context of the non-relativistic limit ($c\to \infty$, in strong topologies) in two and three dimensions \cite{ArsenioHouamedIMRN,DJJ26}. 

The unprojected Ohm law in \eqref{eq:unprojected-system}, on the other hand, is also standard in
the viscous Navier--Stokes--Maxwell literature.  Global two-dimensional
regular solutions, local large-data theories, global small-data theories,
and large-energy results under additional electromagnetic control are also known, see for instance \cite{A19,ArsenioGallagher,AHB25,GermainIbrahimMasmoudi,SS11,Masmoudi}.  These
results underscore that a charged electric component is not an
instability mechanism in every used functional setting. More precisely, one can summarize this point into the statement that viscosity and stronger
solution spaces can stabilize the coupling, whence, loosely speaking,  one can expect that identical results could be established both models \eqref{eq:EM-intro} and \eqref{eq:unprojected-system}.  

To the best of our knowledge, the opposite  statements obtained for \eqref{eq:EM-intro} and \eqref{eq:unprojected-system} through Theorems \ref{thm:main} and \ref{thm:main:2}, respectively, formulate the first type of results identifying a crucial distinction between these two models in the specific inviscid Yudovich theory and at the electromagnetic Sobolev regularity $\nicefrac 32$. 

Once again, it is clear that the current paper does not discuss nor settle the optimality question of the regularity exponent $\nicefrac74$ from the global theory developed in \cite{ArsenioHouamedAPDE} for \eqref{eq:EM-intro}, which reflects the cost of controlling the magnetic
Lipschitz norm simultaneously at every spatial point. 

\subsection*{Strategy of proofs: the crux in the distinct findings of the paper}

The  key observation behind the well-posedness result stated by Theorem \ref{thm:main} for the charge-free system \eqref{eq:EM-intro} stems from the fact that the vorticity
is governed by the forced transport equation \eqref{eq:vorticity-projected}, and hence it
only samples $\nabla b$ along fluid trajectories. This  suggests
that the Eulerian norm is stronger than what the nonlinear problem actually
requires, which is merely a control for $\nabla b$ along the characteristics of the velocity field. The first part of the paper, i.e., the proof of Theorem \ref{thm:main}, exactly exploits that observation by establishing the key linear estimate 
\begin{equation}\label{eq:timelike-trace-intro}
 \norm{\nabla e^{\pm ict|D|}f(X(t))}{L^2_t(I)}
 \lesssim \delta^{-1/2}\norm{f}{\dot H^{3/2}},
\end{equation}
for every flow map $X\in W^{1,\infty}(I;\mathbb R^2)$ satisfying the bound 
\begin{equation*}
 \|\dot X\|_{L^\infty(I)}\leq c-\delta,
 \qquad\text{for some }\delta>0.
\end{equation*}

For a fixed curve, this follows from polar Fourier coordinates.  Indeed, the
phase in a direction $\theta\in\mathbb S^1$ is given by
\begin{equation*}
	\phi_\theta^\pm(t)=\pm ct+\theta\cdot X(t), 
\end{equation*}
which, under the Lipschitz control of the flow, satisfies that 
\begin{equation*}
	|(\phi_\theta^\pm)'(t)|\geq \delta.
\end{equation*}
Given this, a one-dimensional change of variables followed by Plancherel's theorem yields
\eqref{eq:timelike-trace-intro}.  Note in passing that, when $X$ is constant, the bound \eqref{eq:timelike-trace-intro} is closely related to --- and in fact generalizes --- the reversed $L^\infty_xL^2_t$ estimate established by Fang and Wang in 
\cite{FangWang}. 

  The proof of Theorem \ref{thm:main} then implements  that idea at the lower
regularity $H^{\nicefrac32}$.  It builds on the normal product estimates and damped
Maxwell analysis of \cite{ArsenioHouamedAPDE,ArsenioHouamedIMRN}, as well as
the compactness and stability methods developed in
\cite{ArsenioHouamedIMRN,ArsenioHassainiaHouamed} to construct the  unique local solution of \eqref{eq:EM-intro}. At this stage, we tend to believe that the $H^{\nicefrac 32}$ is optimal for the local theory. This claim will not be discussed here, but it is based on the sharpness of the $\nicefrac 32$ regularity threshold required in the reversed Strichartz estimate \eqref{eq:timelike-trace-intro}. We refer again to \cite{FangWang} for the latter statement.

We turn now to discuss the mechanism behind the ill-posedness of the charged \eqref{eq:unprojected-system} system at the same Sobolev regularity $\nicefrac 32$ for the electromagnetic field, i.e., Theorem \ref{thm:main:2}. As briefly pointed out earlier, in the case of \eqref{eq:unprojected-system}, the source of   norm inflation of the vorticity is the presence of the charge density $\diver E$ in vorticity forcing, which generated from expanding Ohm's law in \eqref{eq:vorticity-dichotomy}. To precisely exploit the impact of this term, we consider the simplest possible configuration corresponding to looking for radial solutions to \eqref{eq:unprojected-system} of the form  
\begin{equation*}
	 u=V(t,r)e_\theta,\qquad
 E=R(t,r)e_r+S(t,r)e_\theta,\qquad
 B=b(t,r)e_3,
\end{equation*}
where $r=|x|$, and the pair $(e_r,e_\theta)$ denotes the radial basis in $\mathbb R^2$. The vorticity in this setting is given by 
\begin{equation}\label{vorticity:intro}
\omega(t,r)=\frac1r\partial_r\bigl(rV(t,r)\bigr)
 =\partial_rV(t,r)+\frac{V(t,r)}r.
\end{equation} 
 Besides filtering out the transport term from the vorticity equation, the essential benefit of this ansatz is the reduction of \eqref{eq:unprojected-system} system to the equivalent radial reformulation 
\begin{equation}\label{eq:radial-system}
\left |~
\begin{aligned}
 \partial_tV&=-\sigma b(cR+bV),\\
 \partial_tR&=-\sigma c(cR+bV),\\
 \partial_tS&=-c\,\partial_rb-\sigma c^2S,\\
 \partial_tb&=-c\left(\partial_rS+\frac Sr\right).
\end{aligned}
\right.
\end{equation}
 
 The main observation here is that the transverse pair $(S,b)$ solves a closed linear Maxwell system.  Once $b$ is known, the remaining variables $(V,R)$ evolve according to a pointwise linear ordinary differential equation. 
 
 To exhibit the norm inflation phenomenon, we choose an initial data of the form 
 \begin{equation*}
 	b_0\equiv \beta , \qquad  S_0  \equiv 0 \quad \text{ on a ball } B_{R_*} , \text{ for some constants } R_*>0 \text{ and } \beta\in \mathbb R, 
 \end{equation*}
 while we take $V_0\equiv 0$ and $R_0$ to be the radial component of an electric field $E_0$ exhibiting the failure of the Sobolev embedding of $\dot H^2(\mathbb R^2)$ in $\dot W^{1,\infty}(\mathbb R^2)$. More precisely, $E_0$  is defined as a highly concentrated (rescaled) potential of a smooth function $\phi \in C_c^\infty (\mathbb R^2)$ satisfying 
 \begin{equation*} 
 \Phi(x)=\frac{|x|^2}{2}\quad\text{on }B_1\qquad \text{ and}
 \qquad
 \supp\Phi\subset B_2.
\end{equation*}
 
  To put all of these together, we further exploit the finite speed of  propagation of Maxwell's equation to keep $b$ exactly constant on the support of the concentrated electric profile (for a short time), which eventually allows us to explicitly solve the ODE governing the evolution of $V$, and show that the variable $R$ feeds the  norm inflation at the origin ($x=0$) of the vorticity given in terms of $V$ by   formula  \eqref{vorticity:intro}.

\subsection*{Organization of the paper} The lineup of the next sections of the  paper splits as follows. First, in Section \ref{sec:preliminaries},    we briefly discuss the normal structure product law estimate used throughout the proofs, which originally builds on the analysis laid out   in \cite{ArsenioHouamedAPDE}. Then, in Section \ref{sec:trace}, we establish the central time-like reversed Strichartz-type estimate for half-wave semi-groups,   which will be used in the well-posedness theory of \eqref{eq:EM-intro}. There, we also discuss how to actually apply this estimate to the Maxwell system. After that, Section \ref{sec:apriori} contains the a priori estimates for \eqref{eq:EM-intro} which are employed thereafter in Section \ref{sec:existence} to prove Theorem \ref{thm:main}. 

In Section \ref{sec:ill-posedness}, we collect all the necessary building blocks for the instability analysis of the charged Euler--Maxwell system  \eqref{eq:unprojected-system}. The findings from that part of the paper will be utilized thereafter in Section \ref{sec:proof:thm2} to design the initial data leading to the norm inflation phenomenon, and eventually the conclusion of proof of Theorem \ref{thm:main:2}.

For completeness, we supplement our analysis by a summarizing endnote, and an appendix containing a brief discussion of the sharpness of the $\nicefrac 32$-Sobolev regularity required in the reversed Strichartz-type trace estimate.

\section{Preliminaries}\label{sec:preliminaries}

This section supplies some basic definitions and briefly recalls some important results that will serve in the subsequent sections. 

\subsection*{Notations and function spaces}First, we  agree to use the standard symbol ``$\lesssim $'' for inequalities involving absolute constants that do not depend on the core variables of the problem. Specifically, this type of constants, which are also allowed to differ from one line to another, will sometimes be kept and denoted by ``$C$''. 

For a given positive parameter $r>0$, and a position $x_0$, we denote the ball centered at $x_0$ with radius $r$ by $\mathcal B_r(x_0)$, with  the convention $\mathcal B_r(0)=\mathcal B_r$. We also use this same notation in both one and two-dimensions. 

 For a planar vector field $F=(F_1,F_2,0)$ and a scalar $g$, we denote
\begin{equation*}
 \operatorname{curl}_2F=\partial_1F_2-\partial_2F_1,
 \qquad
 \mathcal Dg=(\partial_2g,-\partial_1g,0)
\end{equation*}
which translates the three dimensional rotation as  
$$\nabla\times F=(\operatorname{curl}_2F)e_3 \quad \text{ and } \quad 
\nabla\times(ge_3)=\mathcal Dg.$$  
We also use the following notation  
\begin{equation*}
	F^\perp = (-F_2,F_1, 0)
\end{equation*}
for  the vector representing the orthogonal of $F$.

Throughout the paper, we use the standard (in-)homogeneous Sobolev space, which can be defined via the Littlewood--Paley dyadic blocks.  For
$s\in\mathbb R$, these spaces are endowed by the (semi-)norms 
\begin{equation*}
 \norm{f}{H^s(\mathbb R^2)}^2\bydef \sum_{q\ge-1}2^{2qs}\norm{\Delta_qf}{L^2(\mathbb R^2)}^2,
 \qquad
 \norm{f}{\dot H^s(\mathbb R^2)}^2\bydef \sum_{q\in \mathbb Z}2^{2qs}\norm{\dot \Delta_qf}{L^2(\mathbb R^2)}^2.
\end{equation*} 
We refer to \cite{BahouriCheminDanchin} for the precise definitions of the dyadic blocks $\Delta_q$, and for a complete discussion about the standard paraproduct and embedding results used throughout the paper. 

\subsection*{Biot--Savart law and the flow map} In the context of our interest in this paper, the  regularity of the vorticity $\omega$  is limited, which mostly belongs to $\dot H^{-1}(\mathbb R^2)\cap L^2(\mathbb R^2) \cap L^\infty (\mathbb R^2)$. Its relation with the velocity field $u$ is given by the Biot--Savart law associated to the two-dimensional equation 
\begin{equation}\label{eq:BS}
	u=\nabla^{\perp} \Delta^{-1}\omega.
\end{equation}
The Biot--Savart law and the Calder\'on--Zygmund multiplier theorem yield
(see \cite[Section 7.1.1]{BahouriCheminDanchin} and
\cite[Section 6.2.3]{Grafakos})  
\begin{equation*}
	\norm {u}{\dot W^{1,p}(\mathbb R^2)}\lesssim  \frac{p^2}{p-1} \norm \omega{L^p(\mathbb R^2)},
\end{equation*}
for all $p\in (1,\infty)$, where $\dot W^{1,p}(\mathbb R^2)$ denotes the space of functions with exactly one derivative being $L^p$-integrable. This estimate is sharp in the sense that a bounded vorticity does not generate a Lipschitz velocity field, in general.
Yet, another useful information that one can obtain is the   boundedness of the velocity field, i.e.,
\begin{equation}\label{eq:velocity-interpolation}
 \norm{u}{L^\infty(\mathbb R^2)}
 \lesssim \norm{u}{L^2(\mathbb R^2)}^{\frac12}\norm{\omega}{L^\infty(\mathbb R^2)}^{\frac12},
\end{equation} 
whose proof can be done via a standard two-dimensional interpolation argument.

 Beyond this, and through the elliptic relation \eqref{eq:BS}, a bounded vorticity, however, gives rise to a log-Lipschitz velocity field $u\in LL(\mathbb R^2)$. It turns out that this is enough to uniquely define the characteristics $X_a$, for every given position $a\in \mathbb R^2$, as the solution of the integral equation 
\begin{equation*}
	X_a(t)=a+\int_0^t u(s,X_a(s))d s,
\end{equation*}
for any $t\in [0,T]$ as soon as $u \in L^\infty( [0,T]; LL(\mathbb R^2))$. An immediate consequence is that 
\begin{equation}\label{flow:time:lip}
	|X_a(t)-X_a(s)|
 \le\|u\|_{L^\infty_{t,x}}|t-s|,
\end{equation}
for every $t,s\in [0,T]$, yielding thus to the assertions  
$$X_a\in W^{1,\infty}(0,T;\mathbb R^2) \quad \text { and } \quad\dot  X_a(t)=u(t,X_a(t)),$$
 for almost every $t\in [0,T]$.
 
 \subsection*{The normal-structure product law} Roughly speaking, the essential feature of the normal structure \eqref{eq:normal-intro} is apparent in the identity 
 \begin{equation*}
 \mathbb P(u\times be_3)=-\mathbb P(bu^\perp)=-[\mathbb P,b]u^\perp.
\end{equation*}
  The advantage of this structure has been deeply exploited in \cite{ArsenioHouamedAPDE}, for the first time in the analysis of the Euler--Maxwell systems, to establish a global well-posedness result for the charge-free model \eqref{eq:EM-intro}. In particular, it allows us to extend the classical range of regularity parameters in the paraproducts of vectors obeying that structure. 
 
 In the next lemma, we only state the unique product estimate that utilizes the specific structure of vectors in \eqref{eq:normal-intro}, which   will be used in our proofs, later on. This corresponds to a particular case covered by estimate (3-13) from \cite[Lemma 3.4]{ArsenioHouamedAPDE}.

\begin{lemma}[Endpoint normal product law]\label{lem:product} 
For any  smooth  divergence-free vector field $u$ and a scalar function $b:\mathbb R^2\to\mathbb R$, it holds that 
\begin{equation*}
 \norm{\mathbb P(u\times be_3)}{\dot H^{\frac32}(\mathbb R^2)}
 \lesssim \norm{u}{L^\infty\cap \dot H^1(\mathbb R^2)} 
 \norm{b}{\dot  H^{\frac32}(\mathbb R^2)}.
\end{equation*}
The same estimate holds for time-dependent fields pointwise in time.
\end{lemma}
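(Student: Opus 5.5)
We use the identity $\mathbb P(u\times be_3)=-\mathbb P(bu^\perp)$. Since $u$ is divergence free, $u^\perp$ is curl free, so $\mathbb P u^\perp=0$. Hence
\begin{equation*}
\mathbb P(bu^\perp)=\mathbb P(bu^\perp)-b\,\mathbb P u^\perp=[\mathbb P,b]u^\perp .
\end{equation*}
The proof will estimate this commutator in $\dot H^{3/2}$ through Bony's paraproduct decomposition, in the spirit of \cite[Lemma 3.4]{ArsenioHouamedAPDE}.

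We write
\begin{equation*}
[\mathbb P,b]u^\perp=\big(\mathbb P T_b u^\perp-T_b\mathbb P u^\perp\big)+\mathbb P T_{u^\perp}b-T_{\mathbb P u^\perp}b+\mathbb P R(b,u^\perp)-R(b,\mathbb Pu^\perp).
\end{equation*}
Since $\mathbb P u^\perp=0$, the terms $T_{\mathbb Pu^\perp}b$ and $R(b,\mathbb Pu^\perp)$ vanish, and $T_b\mathbb P u^\perp=0$ as well. Because $\mathbb P$ is a bounded Fourier multiplier on every $\dot H^s$, the remaining terms reduce to three estimates.

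\textbf{Term $\mathbb P T_{u^\perp}b$.} The standard paraproduct bound $\|T_{u}b\|_{\dot H^{3/2}}\lesssim\|u\|_{L^\infty}\|b\|_{\dot H^{3/2}}$ applies directly.

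\textbf{Term $\mathbb P R(b,u^\perp)$.} The remainder needs positive total regularity. We place $u\in\dot H^1$ and $b\in\dot H^{3/2}$, but the resulting sum of exponents must be measured in a way that lands exactly in $\dot H^{3/2}$. In two dimensions the remainder law gives $\dot H^{1}\times\dot H^{3/2}\to\dot B^{3/2}_{1,1}$-type bounds only after losing $d/2=1$ derivative. Instead we use $\|R(b,u)\|_{\dot H^{3/2}}\lesssim \|u\|_{\dot B^{0}_{\infty,\infty}}\|b\|_{\dot H^{3/2}}$. This works because the sum of regularities is $3/2>0$ and $L^\infty\subset\dot B^0_{\infty,\infty}$. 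So this term is controlled by $\|u\|_{L^\infty}$ alone.

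\textbf{Term $\mathbb P T_b u^\perp$.} This is the crux and the main obstacle, because it carries low frequencies of $b$ against high frequencies of $u$, which has only $\dot H^1$ regularity. A naive bound would need $3/2$ derivatives on $u$. The structure removes this loss. We write
\begin{equation*}
\mathbb P T_b u^\perp=\sum_q \mathbb P\big(S_{q-1}b\,\Delta_q u^\perp\big)=\sum_q[\mathbb P,S_{q-1}b]\Delta_q u^\perp ,
\end{equation*}
using again $\mathbb P\Delta_qu^\perp=0$. Each block is localized at frequency $\sim2^q$. The classical commutator estimate then gives
\begin{equation*}
\|[\mathbb P\tilde\Delta_q,S_{q-1}b]\Delta_qu\|_{L^2}\lesssim2^{-q}\|\nabla S_{q-1}b\|_{L^4}\|\Delta_qu\|_{L^4}.
\end{equation*}
Multiplying by $2^{3q/2}$ and using Bernstein's inequality with the embedding $\dot H^{3/2}\subset\dot W^{1,4}$ yields
\begin{equation*}
2^{q/2}\|\nabla S_{q-1}b\|_{L^4}\|\Delta_qu\|_{L^4}\lesssim\|b\|_{\dot H^{3/2}}\,2^{q}\|\Delta_q u\|_{L^2}.
\end{equation*}
Summing in $\ell^2_q$ gives $\|b\|_{\dot H^{3/2}}\|u\|_{\dot H^1}$. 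Some care is needed to make the commutator gain a full derivative while $\nabla S_{q-1}b$ is only bounded in $L^4$. If the Bernstein step on $\Delta_qu$ fails to balance, we fall back on $\|\nabla S_{q-1}b\|_{L^\infty}\lesssim\sum_{j<q}2^{j}\|\Delta_jb\|_{L^2}$. This sum is bounded by $2^{q/2}$ times an $\ell^2$ convolution of $2^{3j/2}\|\Delta_jb\|_{L^2}$, and paired with $2^{q/2}\|\Delta_qu\|_{L^2}$ after multiplying by $2^{q/2}$ it gives the same conclusion by Young's inequality for sequences.

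Collecting the three estimates proves the lemma. The time-dependent statement follows by applying it at each fixed time.
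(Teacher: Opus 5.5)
Your proof is correct and follows the route the paper indicates. The paper records $\mathbb P(u\times be_3)=-[\mathbb P,b]u^\perp$ and defers the estimate to \cite[Lemma 3.4]{ArsenioHouamedAPDE}, which is a paraproduct argument of this type, with the structure used exactly to turn $\mathbb P T_b u^\perp$ into commutators $[\mathbb P\tilde\Delta_q,S_{q-1}b]\Delta_q u^\perp$ gaining $2^{-q}$. Two asides need no repair: your $L^4\times L^4$ bound already balances exactly via $\dot H^{1/2}\hookrightarrow L^4$, so the fallback is unnecessary, and your garbled remark on the remainder is harmless, since $R(b,u^\perp)\in\dot B^{5/2}_{1,1}\hookrightarrow\dot B^{3/2}_{2,1}\subset\dot H^{3/2}$ would also work with $u\in\dot H^1$.
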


 \section{The time-like endpoint estimate}\label{sec:trace}
 
In this section, we establish the central key estimate to control   the gradient of the magnetic field along the characteristics of the velocity, which appears to be the main source term in the vorticity equation for the charge-free \eqref{eq:EM-intro} system.
 
First, we prove a time-like trace estimate for the half-wave semi-group, which we extend thereafter to the undamped Maxwell propagator.   

\begin{proposition}[Time-like half-wave trace]\label{prop:trace}
Let $I\subset\mathbb R$ be an interval, and consider a flow map $X\in W^{1,\infty}(I;\mathbb R^2)$ being such that 
\begin{equation}\label{eq:curve-gap}
 \|\dot X\|_{L^\infty(I)}\leq  c-\delta,
\end{equation}
for some $\delta>0$.
Let $m(D)$ be a scalar- or matrix-valued  zero-order Fourier multiplier whose
symbol is bounded.  Then, for either sign, it holds that
\begin{equation}\label{eq:timelike-trace}
 \norm{\nabla m(D)e^{\pm ict|D|}f(X(t))}{L^2_t(I)}
 \lesssim \delta^{-1/2}\norm{m}{L^\infty_\xi}
 \norm{f}{\dot H^{\frac32}(\mathbb R^2)},
\end{equation}
for any Schwartz function $f $. In the matrix-valued case, the symbol norm above is the essential supremum of the operator norm.
\end{proposition}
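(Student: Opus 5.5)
Write the operator in Fourier polar coordinates $\xi=\rho\theta$, with $\rho>0$ and $\theta\in\mathbb S^1$. For a Schwartz $f$,
\begin{equation*}
 \nabla m(D)e^{\pm ict|D|}f(X(t))
 =\frac{1}{(2\pi)^2}\int_{\mathbb S^1}\int_0^\infty i\rho\theta\, m(\rho\theta)\hat f(\rho\theta)\,
 e^{i\rho(\pm ct+\theta\cdot X(t))}\rho\, d\rho\, d\theta .
\end{equation*}
For fixed $\theta$, set $\phi_\theta(t)=\pm ct+\theta\cdot X(t)$. The hypothesis \eqref{eq:curve-gap} gives $|\dot\phi_\theta|\ge c-|\dot X|\ge\delta$ a.e. Since $\phi_\theta$ is Lipschitz and its derivative has a fixed sign ($\pm$), it is strictly monotone and bi-Lipschitz onto its image $J_\theta$. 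Apply Minkowski's inequality in $\theta$ to get
\begin{equation*}
 \norm{\cdot}{L^2_t(I)}\lesssim \int_{\mathbb S^1}\Bigl\| \int_0^\infty g_\theta(\rho)e^{i\rho\phi_\theta(t)}d\rho\Bigr\|_{L^2_t(I)}d\theta,
 \qquad g_\theta(\rho)=\rho^2\,\theta\, m(\rho\theta)\hat f(\rho\theta).
\end{equation*}

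For each $\theta$, change variables $s=\phi_\theta(t)$, so $dt=ds/|\dot\phi_\theta|\le \delta^{-1}ds$. The inner norm is then at most $\delta^{-1/2}\|\check g_\theta\|_{L^2_s(J_\theta)}$. Enlarging $J_\theta$ to $\mathbb R$ and applying one-dimensional Plancherel bounds this by $\delta^{-1/2}\|g_\theta\|_{L^2_\rho}$. The point of this step is that the lower bound on the phase speed replaces any dispersion: the bound is uniform in the curve and uses only $|\dot\phi_\theta|\ge\delta$.

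Next I would pass from the $L^1_\theta$ sum back to the Sobolev norm. Using $|m|\le\norm{m}{L^\infty_\xi}$ (the operator norm in the matrix case) and Cauchy--Schwarz on the compact circle,
\begin{equation*}
 \int_{\mathbb S^1}\|g_\theta\|_{L^2_\rho}d\theta\lesssim \norm{m}{L^\infty}\Bigl(\int_{\mathbb S^1}\int_0^\infty\rho^4|\hat f(\rho\theta)|^2d\rho\,d\theta\Bigr)^{1/2}
 =\norm{m}{L^\infty}\Bigl(\int_{\mathbb R^2}|\xi|^3|\hat f(\xi)|^2d\xi\Bigr)^{1/2}.
\end{equation*}
The right-hand side is $\norm{m}{L^\infty}\norm{f}{\dot H^{3/2}}$. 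This accounts for the $\nicefrac32$: one derivative from $\nabla$, plus one half from the polar measure $\rho\,d\rho$ split against the one-dimensional $L^2$ measure $d\rho$.

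The main technical care lies in the change of variables for a merely Lipschitz $X$. I would justify it by the area formula for monotone absolutely continuous $\phi_\theta$. Alternatively, I would approximate $X$ by smooth curves with $\|\dot X_\varepsilon\|\le c-\delta$ (mollification preserves this bound) and pass to the limit, using that the integrand is continuous in $X$ for Schwartz $f$. Fubini and Minkowski are legitimate since $\hat f$ is Schwartz; the singularity at $\rho=0$ is harmless because of the factor $\rho^2$.
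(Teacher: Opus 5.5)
Your proposal is correct and follows essentially the same route as the paper: polar Fourier coordinates, the phase $\phi_\theta(t)=\pm ct+\theta\cdot X(t)$ with $|\phi_\theta'|\geq\delta$, a one-dimensional change of variables plus Plancherel in each direction $\theta$, and then Minkowski and Cauchy--Schwarz on $\mathbb S^1$ to recover $\|f\|_{\dot H^{3/2}}$. Your extra justification of the change of variables for merely Lipschitz $X$, via the area formula or mollification, spells out a step the paper only asserts through the $\delta^{-1}$-Lipschitz inverse of $\phi_\theta$.
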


\begin{remark}
	Initially, the trace is the pointwise trace of a Schwartz function whose Fourier transform vanishes near the origin.  We realize $\dot H^{\nicefrac32}$ as the completion of that class, modulo constants.  The estimate above defines the trace for a general element of $\dot H^{\nicefrac32}$, and the gradient makes it independent of the constant representative.
\end{remark}

\begin{remark}[Relation with  reversed Strichartz estimate]\label{rem:fang-wang} Time-space reversed Strichartz-type estimates are known and not new in the literature on dispersive propagators. In particular,  
 Fang and Wang established in \cite[Proposition 4]{FangWang}   the bound  
\begin{equation}\label{FangWang:es}
 \norm{e^{it|D|}f(x)}{L^\infty_x((\mathbb R^d);L^q_t(\mathbb R))}
 \lesssim\norm{f}{\dot H^{\frac d2-\frac 1q}(\mathbb R^d)},\qquad 2\leq q<\infty.
\end{equation} 
For $d=q=2$, applying one derivative gives the constant-curve case of
\eqref{eq:timelike-trace}. This stationary estimate does not by itself imply
the moving estimate that we need in this work, i.e.,   generally speaking:
\begin{equation*}
	 \sup_x\|F(\cdot\,,x)\|_{L^2_t}
 \quad\hbox{does not control}\quad
 \|F(t,X(t))\|_{L^2_t}.
\end{equation*} 
The time-like trace estimate we establish in Proposition \ref{prop:trace} above can thus be interpreted as a generalization of \eqref{FangWang:es}.
\end{remark}

\begin{proof}
  In polar Fourier coordinates
$\xi=r\theta$, $r>0$ and $\theta\in\mathbb S^1$, one has that 
\begin{equation*}
 \nabla m(D)e^{\pm ict|D|}f(X(t))
 =C\int_{\mathbb S^1}\int_0^\infty
 e^{ir\phi_\theta(t)}\,ir^2\theta\,m(r\theta)
 \widehat f(r\theta)d  rd \theta,
\end{equation*}
where the phase $\phi_\theta $ is given by
\begin{equation*}
	\phi_\theta(t)\bydef \pm ct+X(t)\cdot\theta, \quad \text{ for all } t\in I.
\end{equation*}
Note in passing that, due to the assumption on the Lipschitz bound of the flow map, it holds   
\begin{equation*}
	\phi_\theta'(t)\geq   \delta >0 ,
\end{equation*}
for the plus sign, and 
\begin{equation*}
	\phi_\theta'(t)\le-\delta < 0,
\end{equation*}
for the minus sign.  All in all, in either case, the phase function $\phi_\theta$ is one-to-one on
$I$, and its inverse is $\delta^{-1}$-Lipschitz.  Thus, introducing the function
\begin{equation*}
	s\mapsto  g_\theta(s)=\int_0^\infty e^{irs}\,ir^2\theta\,m(r\theta)
 \widehat f(r\theta)d  r,
\end{equation*}
we find, by virtue of a one-dimensional change-of-variables and Plancherel theorem, that
\begin{equation}\label{eq:theta-trace}
 \norm{g_\theta\circ\phi_\theta}{L^2(I)}^2
 \leq \delta^{-1}\norm{g_\theta}{L^2(\mathbb R)}^2
 \leq C\delta^{-1}\norm{m}{L^\infty}^2
 \int_0^\infty r^4|\widehat f(r\theta)|^2d  r.
\end{equation}
Therefore, Minkowski's inequality in $\theta$, followed by Cauchy--Schwarz on $\mathbb S^1$,   yields that 
\begin{align*}
 \norm{\nabla m(D)e^{\pm ict|D|}f(X(t))}{L^2_t(I)}
 &\leq C\delta^{-1/2}\norm{m}{L^\infty}
 \left(\int_{\mathbb S^1}\int_0^\infty
 r^4|\widehat f(r\theta)|^2d  rd \theta\right)^{\frac12}\\
 &=C\delta^{-1/2}\norm{m}{L^\infty}\norm{f}{\dot H^{\frac32}},
\end{align*}
because $d \xi=rd  rd \theta$.  Completion in the homogeneous norm gives
the asserted trace.  In particular, for inhomogeneous data this agrees with
the extension obtained from the density of $\mathcal S$ in $H^{3/2}$.
\end{proof}

A direct consequence of the previous proposition is the estimate for the inhomogeneous half-wave problem, which we  discuss next.
\begin{corollary}[Retarded trace estimate]\label{cor:duhamel-trace}
Under the assumptions of Proposition \ref{prop:trace}, if
$I=[0,T]$ and $F\in L^1(0,T;\dot H^{\nicefrac32})$, then one has that 
\begin{equation}\label{eq:duhamel-trace}
 \left\|
 \left.\nabla\int_0^t m(D)e^{\pm ic(t-s)|D|}F(s)d  s
 \right|_{x=X(t)}
 \right\|_{L^2_t(0,T)}
 \leq C\delta^{-1/2}\norm{m}{L^\infty}
 \norm{F}{L^1_t\dot H^{\frac32}}.
\end{equation}
\end{corollary}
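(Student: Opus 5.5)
The plan is a Minkowski-type argument that reduces the Duhamel integral to the homogeneous trace estimate of Proposition~\ref{prop:trace}, applied once for each fixed source time $s$. First I rewrite the integrand so that it becomes a free half-wave evaluated along a curve. For fixed $s\in[0,T]$, set
\begin{equation*}
 f_s \bydef m(D)e^{\mp ics|D|}F(s).
\end{equation*}
Then $m(D)e^{\pm ic(t-s)|D|}F(s)=e^{\pm ict|D|}\,e^{\mp ics|D|}m(D)F(s)$. Since $e^{\mp ics|D|}$ is unitary on $\dot H^{3/2}$ and commutes with $m(D)$, I can apply Proposition~\ref{prop:trace} with the multiplier $m(D)e^{\mp ics|D|}$. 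Its symbol has the same $L^\infty$ norm as $m$. Alternatively, I apply it to $m(D)$ acting on the data $e^{\mp ics|D|}F(s)$, whose $\dot H^{3/2}$ norm equals $\norm{F(s)}{\dot H^{3/2}}$.

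Next I handle the causal cutoff $\mathbf 1_{s<t}$. Define
\begin{equation*}
 G_s(t)\bydef \mathbf 1_{\{s<t\}}\,\nabla m(D)e^{\pm ic(t-s)|D|}F(s)\big|_{x=X(t)}.
\end{equation*}
The quantity to bound is $\int_0^T G_s(t)\,ds$. Minkowski's integral inequality gives
\begin{equation*}
 \Big\|\int_0^TG_s\,ds\Big\|_{L^2_t(0,T)}\le\int_0^T\norm{G_s}{L^2_t(0,T)}ds\le\int_0^T\norm{G_s}{L^2_t(s,T)}ds .
\end{equation*}
For each fixed $s$, the function $G_s$ restricted to $(s,T)$ is exactly the trace along $X$ on the interval $I=(s,T)$. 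The gap condition \eqref{eq:curve-gap} holds there, since $I\subset[0,T]$. Proposition~\ref{prop:trace} therefore yields
\begin{equation*}
 \norm{G_s}{L^2_t(s,T)}\le C\delta^{-1/2}\norm{m}{L^\infty}\norm{F(s)}{\dot H^{3/2}}.
\end{equation*}
The key point is that the constant is uniform in the interval, so the truncation costs nothing. Integrating in $s$ gives \eqref{eq:duhamel-trace}.

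The main obstacle is justification rather than the estimate itself. Minkowski requires measurability of $(s,t)\mapsto G_s(t)$ and a well-defined pointwise trace. I would first prove the bound for $F$ in a dense class: simple functions in $s$, or continuous functions in $s$ valued in Schwartz functions with Fourier support away from the origin. For that class everything is smooth and jointly continuous off the diagonal. I would then extend by density in $L^1_t\dot H^{3/2}$, using the bound just proved to define the trace of the Duhamel term as the continuous extension, consistent with the remark following Proposition~\ref{prop:trace}.
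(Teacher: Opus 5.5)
Your proposal is correct and follows essentially the same route as the paper: Minkowski in $s$ over the retarded triangle, then Proposition \ref{prop:trace} applied on $[s,T]$ to the data $e^{\mp ics|D|}F(s)$, using that the constant does not depend on the interval and that the half-wave group is unitary on $\dot H^{3/2}$, followed by integration in $s$ and a density argument. Your handling of measurability and of the dense class is somewhat more explicit than the paper's ``suppose $F$ is smooth'' step, but the argument is the same.
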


\begin{proof}
At first, we suppose that $F$ is smooth.  By Minkowski's inequality in $s$,
the retarded time triangle gives
\begin{align*}
 &\left\|\int_0^t \nabla m(D)e^{\pm ic(t-s)|D|}F(s)(X(t))d  s
 \right\|_{L^2_t(0,T)}\\
 &\qquad\leq \int_0^T
 \left\|\mathbf 1_{[s,T]}(t)\nabla m(D)e^{\pm ic(t-s)|D|}
 F(s)(X(t))\right\|_{L^2_t(0,T)}d  s.
\end{align*}
Now, for fixed $s$, the norm on the right-hand side is the norm on $[s,T]$ of
\begin{equation*}
	t\mapsto \nabla m(D)e^{\pm ict|D|}
 \bigl(e^{\mp ics|D|}F(s)\bigr)(X(t)).
\end{equation*}
Proposition \ref{prop:trace} then applies  to the restriction of $X$ to $[s,T]$. Hence, in view of the fact that the half-wave group is unitary on $\dot H^{\nicefrac32}$, and integration in $s$
leads to \eqref{eq:duhamel-trace}.  Once again, an approximation argument in
$L^1_t\dot H^{\nicefrac32}$ yields the result for general functions in that space.
\end{proof}

\subsection*{Application to the normal Maxwell system}  
Let $D=|D|$ be the Fourier multiplier of symbol $|\xi|$, and set
\begin{equation*}
 q=D^{-1}\operatorname{curl}_2E,
 \qquad
 \ell=D^{-1}\operatorname{curl}_2j.
\end{equation*}
 Since $E$ and $j$ are
divergence-free, $E=-\nabla^\perp D^{-1}q$, and, similarly, $j$ is recovered from
$\ell$ by a zero-order Riesz transform.  This reconstruction is precisely
where the Gauss constraint is used. In terms of these new variables, the   Maxwell equations from
\eqref{eq:EM-intro} can be recast as
\begin{equation*}
 \partial_tq=cDb-c\ell,
 \qquad
 \partial_tb=-cDq.
\end{equation*}
Thus, the transformed variables $w_ \pm=q\pm ib$   solve the forced half-wave
\begin{equation}\label{eq:halfwaves}
 (\partial_t  \pm  icD)w_\pm=- c\ell.
\end{equation}

The next corollary is a direct consequence of Proposition \ref{prop:trace} and Corollary \ref{cor:duhamel-trace}.

\begin{corollary}[Maxwell trace]\label{cor:maxwell-trace}
Let $(E,b)$ be the mild solution of the normal Maxwell equations on $[0,T]$,
with $\diver E=\diver j=0$, $(E_0,b_0)\in H^{\nicefrac32}$, and
$j\in L^1(0,T;H^{\nicefrac32})$.  For any curve satisfying
\eqref{eq:curve-gap}, it then holds that 
\begin{equation*}
 \norm{\nabla b(t,X(t))}{L^2_t(0,T)}
 \lesssim \delta^{-\frac12}
 \left(\norm{(E_0,b_0)}{\dot H^{3/2}}
 +c\norm{j}{L^1_t\dot H^{3/2}}\right).
\end{equation*} 
\end{corollary}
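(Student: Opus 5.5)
The plan is to write $b$ in terms of the half-wave variables $w_\pm=q\pm ib$ from \eqref{eq:halfwaves}, apply Duhamel's formula, and then invoke Proposition \ref{prop:trace} for the free part and Corollary \ref{cor:duhamel-trace} for the forcing.

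\emph{Reduction to half-waves.} Since $b=\frac{1}{2i}(w_+-w_-)$, it suffices to bound $\nabla w_\pm(t,X(t))$ in $L^2_t(0,T)$. Equation \eqref{eq:halfwaves} gives the mild representation
\begin{equation*}
 w_\pm(t)=e^{\mp ictD}w_\pm(0)-c\int_0^t e^{\mp ic(t-s)D}\ell(s)\,ds .
\end{equation*}
Here $w_\pm(0)=q_0\pm ib_0$ with $q_0=D^{-1}\curl_2E_0$. The map $E\mapsto q$ has symbol $i\xi^\perp\cdot/|\xi|$, which is a bounded zero-order multiplier, so
\begin{equation*}
 \norm{w_\pm(0)}{\dot H^{3/2}}\lesssim\norm{(E_0,b_0)}{\dot H^{3/2}} .
\end{equation*}
In the same way, $\ell=D^{-1}\curl_2 j$ gives $\norm{\ell}{L^1_t\dot H^{3/2}}\lesssim\norm{j}{L^1_t\dot H^{3/2}}$.

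\emph{Free part.} I would apply Proposition \ref{prop:trace} with $m\equiv1$ and $f=w_\pm(0)$, using the sign matching $e^{\mp ictD}$. This yields the bound $\delta^{-1/2}\norm{(E_0,b_0)}{\dot H^{3/2}}$. Alternatively, one can absorb the zero-order symbols directly by taking $m$ to be the matrix symbol mapping $(E_0,b_0)\mapsto b$-component, which has bounded operator norm.

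\emph{Duhamel part.} I would apply Corollary \ref{cor:duhamel-trace} with $F=\ell$ and $m\equiv1$. This gives the bound $c\,\delta^{-1/2}\norm{\ell}{L^1_t\dot H^{3/2}}\lesssim c\,\delta^{-1/2}\norm{j}{L^1_t\dot H^{3/2}}$. Summing the two contributions for both signs proves the claim.

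\emph{Main obstacle.} The delicate point is justifying the pointwise trace for mild solutions that only have $H^{3/2}$ regularity. I would first prove the estimate for Schwartz data and smooth $j$, using the fact that the curve condition \eqref{eq:curve-gap} does not depend on the data. I would then pass to the limit by density, using the linearity of the solution map and the fact that the bounds above define the trace as a continuous extension. This is consistent with the remark after Proposition \ref{prop:trace}.

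A secondary check is needed at low frequencies. The multiplier $D^{-1}\curl_2$ is zero-order, but $q$ is defined only modulo the $\dot H^{3/2}$ completion. Because only $\nabla b$ enters the estimate, this causes no issue.
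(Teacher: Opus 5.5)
Your proposal is correct and follows essentially the same route as the paper. You write the half-wave variables $w_\pm=q\pm ib$ in Duhamel form, bound the free part with Proposition~\ref{prop:trace} and the forcing (with $\ell=D^{-1}\curl_2 j$, a bounded zero-order image of $j$) with Corollary~\ref{cor:duhamel-trace}, and then recover $b$ from $2ib=w_+-w_-$; your remarks on the multiplier bounds and on the density extension only make explicit what the paper leaves implicit.
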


\begin{proof}
Writing  \eqref{eq:halfwaves} in  Duhamel formula, and applying
Proposition \ref{prop:trace} and Corollary \ref{cor:duhamel-trace}, yields that 
\begin{equation*}
	\norm{\nabla w_{\pm}(t,X(t))}{L^2_t(0,T)}
 \lesssim \delta^{-\frac12}
 \left(\norm{(E_0,b_0)}{\dot H^{\frac32}}
 +c\norm{j}{L^1_t\dot H^{\frac32}}\right).
\end{equation*}
 From the definition of $w_\pm $, i.e., writing $2ib= w_+-w_-$, this estimate yields the  one for $b$.
\end{proof}

\section{A priori estimates for the charge-free system}\label{sec:apriori}

All calculations in this section are first made for smooth solutions.  They
will be applied uniformly to the approximation introduced in Section \ref{sec:existence}, below. For this section, we adopt the following notations:  
\begin{equation*}
 A_0\bydef \norm{(u_0,E_0,b_0)}{L^2},\qquad
 M_0\bydef \norm{(E_0,b_0)}{H^{\frac32}},
\end{equation*}
and, for every $0<T'<T$, we further denote (for $r\in \{2,\infty\} $)
\begin{equation*}
	\Omega_{r,T'}\bydef \norm{\omega}{L^\infty(0,T';L^r)} , \qquad U_{T'} \bydef \norm{u}{L^\infty(0,T';H^1\cap L^\infty)},
\end{equation*}
\begin{equation*}
	M_{T'}\bydef \norm{(E,b)}{L^\infty(0,T';H^{\frac32})}, \qquad J_{T'}\bydef \norm{j}{L^\infty(0,T';H^{\frac32})}, 
\end{equation*}
and 
\begin{equation*} 
 G_{T'}(\delta) \bydef 
 \sup_{\norm{\dot X}{L^\infty(0,T')}\leq c-\delta}
 \norm{\nabla b(t,X(t))}{L^2(0,T')}.  
\end{equation*} 

\subsection*{Energy, Maxwell regularity, and the current}

The first set of estimates (namely the basic energy bounds \eqref{eq:energy-identity} and \eqref{eq:maxwell-X}, below) in this section are standard by now, and have been used in previous works such as \cite{ArsenioGallagher,ArsenioHouamedIMRN,ArsenioHouamedAPDE}. However, for       completeness of our presentation, we provide concise  details, below.

Taking the $L^2$ scalar product of the fluid equation with $u$, and of the
Maxwell equations with $(E,b)$, gives the identity 
\begin{equation*}
	\frac12\frac{d }{d  t}\norm{(u,E,b)}{L^2}^2
 =-\int_{\mathbb R^2}j\cdot(u\times B+cE)d  x.
\end{equation*}
Since $j$ is divergence-free, it is orthogonal to gradients, whence 
\begin{equation*}
	\int j\cdot(u\times B)=\int j\cdot\mathbb P(u\times B).
\end{equation*}
Ohm's law now yields the exact identity
\begin{equation}\label{eq:energy-identity}
 \frac12\frac{d }{d  t}\norm{(u,E,b)}{L^2}^2
 +\frac1\sigma\norm{j}{L^2}^2=0.
\end{equation}

We turn now to discuss Sobolev regularity of the electromagnetic fields. To that end, we apply $\Delta_q$ to Maxwell's equations and take the dyadic energy to find that 
\begin{equation}\label{eq:dyadic-maxwell}
 \frac12\frac{d }{d  t}
 \left(\norm{\Delta_kE}{L^2}^2+\norm{\Delta_kb}{L^2}^2\right)
 +\sigma c^2\norm{\Delta_kE}{L^2}^2
 =-\sigma c \left\langle\Delta_kE, \Delta_k \mathbb P(u\times B)\right\rangle .
\end{equation}
Taking the appropriate $\ell^2$ norm in $k$, and using
Lemma \ref{lem:product} to control the source term, yields, for some absolute constant $C>0$, that 
\begin{equation}\label{eq:maxwell-X}
 M_{T'}\leq M_0+C\sigma c \int_0^{T'} U_{\tau }M_{\tau} d\tau ,
\end{equation}
whereby, Gr\"onwall lemma implies that
\begin{equation}\label{eq:maxwell-gronwall}
 M_{T'}\leq M_0\exp(C\sigma cT'U_{T'}).
\end{equation}
In view of the definition of the solenoidal  Ohm's law in \eqref{eq:EM-intro}, applying Lemma \ref{lem:product} once again together with the two-dimensional embedding
$H^{3/2}\hookrightarrow L^\infty(\mathbb R^2)$ lead to  the control 
\begin{equation}\label{eq:current-X}
 \norm{j}{L^\infty((0,T')\times\mathbb R^2)}\lesssim J_{T'} 
 \leq C\sigma(c+U_{T'})M_{T'}.
\end{equation}

\subsection*{Vorticity along the flow}

We recall that, for vector fields obeying the normal structure \eqref{eq:normal-intro}, it holds that 
\begin{equation*}
 \operatorname{curl}_2(j\times be_3)
 =-\diver(bj)=-j\cdot\nabla b,
\end{equation*}
because $\diver j=0$.     Let $X(t,a)$ be
the volume-preserving Yudovich flow of $u$ and, for a fixed position $a\in \mathbb R^2$, we use the shorthand notation
$X_a(t)=X(t,a)$.  

 We want to apply Proposition \ref{prop:trace} with this flow map, which requires that   the single trajectory $t\mapsto X_a(t)$ is Lipschitz (in time).
This requirement is available in the present Yudovich class, which we previously discussed in Section \ref{sec:preliminaries}. More precisely, the time-Lipschitz estimate \eqref{flow:time:lip} of the flow map $X_a$ yields, under the condition 
$$\norm{u}{L^\infty_{t,x}}\leq c-\delta,$$
 that 
$$\|\dot X_a\|_{L^\infty_t}\leq c-\delta.$$   
Therefore, as soon as the above $L^\infty_{t,x}$ control of the velocity is at hand, Corollary \ref{cor:maxwell-trace} applies to deduce that 
\begin{equation}\label{eq:G-bound}
 G_{T'}(\delta)
 \leq C\delta^{-\frac12}\left(M_0+cT'J_{T'}\right).
\end{equation}
On the other hand, along a characteristic, the vorticity equation can be recast as
\begin{equation*}
	 \omega(t,X_a(t))=\omega_0(a)
 -\int_0^t j(s,X_a(s))\cdot\nabla b(s,X_a(s))d  s.
\end{equation*}
Thus, Cauchy--Schwarz inequality in time and \eqref{eq:G-bound} yield that 
\begin{equation}\label{eq:omega-infty}
 \Omega_{\infty,T'}
 \le\norm{\omega_0}{L^\infty}
 +T'^{\frac12}\norm{j}{L^\infty_{t,x}}G_{T'}(\delta).
\end{equation}
Besides, the $L^2$ transport estimate, together with the trivial estimate
$$\norm{\nabla b}{L^2_x}\leq M_{T'},$$
 also gives that
\begin{equation}\label{eq:omega-two}
 \Omega_{2,T'}
 \le\norm{\omega_0}{L^2}+CT'\norm{j}{L^\infty_{t,x}}M_{T'}.
\end{equation}
Finally, by virtue of interpolation inequality \eqref{eq:velocity-interpolation}, the previous two estimates of the vorticity lead to the control for the velocity field
\begin{equation}\label{eq:U-bound}
 U_{T'}
 \leq C\left(A_0+\Omega_{2,T'}
 +A_0^{\frac12}\Omega_{\infty,T'}^{\frac12}\right).
\end{equation}

\subsection*{Persistence of the light-speed gap}
Next, we discuss the time-persistence of initial light-speed gap estimate. To that end,  after applying Leray's projector $\mathbb P$ to the velocity equation, we obtain by a direct $L^2_x$ estimate  that 
\begin{equation}\label{eq:ut-bound}
 \norm{\partial_tu}{L^2_x}
 \leq C\left(\norm{u}{L^\infty_x}\norm{\omega}{L^2_x}
 +\norm{j}{L^\infty_x}\norm{b}{L^2_x}\right).
\end{equation}
Now, employing the interpolation inequality \eqref{eq:velocity-interpolation} this time to $u(t)-u_0$ yields that
\begin{equation}\label{eq:gap-continuity}
 \norm{u(t)-u_0}{L^\infty}
 \leq C\norm{u(t)-u_0}{L^2_x}^{\frac12}
 \left(\Omega_{\infty,T'}+\norm{\omega_0}{L^\infty_x}\right)^{\frac12},
\end{equation}
where we also utilized \eqref{eq:omega-infty}.
Note in passing that, assuming all norms used here are bounded, the right-hand side in this last inequality tends to zero uniformly as $T'\downarrow0$ by
virtue of \eqref{eq:ut-bound}.

We are now in a position to close the bootstrap argument for the local a priori estimates, which we summarize in the next statement.
\begin{proposition}[Uniform short-time bound]\label{prop:bootstrap}
For data satisfying the hypotheses of Theorem \ref{thm:main}, there is a
time $T>0$, depending only on the quantities displayed in that theorem, such
that every smooth solution satisfying the energy identity obeys, on $[0,T]$,
\begin{equation}\label{eq:bootstrap-conclusion}
\begin{gathered}
 M_T\le2M_0,\qquad
 \Omega_{2,T}\le\norm{\omega_0}{L^2}+1,\qquad
 \Omega_{\infty,T}\le\norm{\omega_0}{L^\infty}+1,\\
 \norm{u}{L^\infty((0,T)\times\mathbb R^2)}
 \leq c-\frac{\delta_0}{2}.
\end{gathered}
\end{equation} 
\end{proposition}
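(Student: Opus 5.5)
This is a continuity (bootstrap) argument. For a smooth solution I will define $T^*$ as the supremum of times $T'\le T$ on which the weakened bounds
\[
M_{T'}\le 3M_0,\quad \Omega_{2,T'}\le\norm{\omega_0}{L^2}+2,\quad \Omega_{\infty,T'}\le\norm{\omega_0}{L^\infty}+2,\quad \norm{u}{L^\infty((0,T')\times\mathbb R^2)}\le c-\tfrac{\delta_0}{4}
\]
hold. The claim is that, for $T$ small depending only on the data quantities, these bounds improve on $[0,T^*]$ to the stated ones in \eqref{eq:bootstrap-conclusion}. Since all the quantities are continuous in $T'$ for smooth solutions, this forces $T^*=T$.

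Under the weakened bounds I will first record that every quantity is controlled by data-dependent constants.
\begin{itemize}
\item By the energy identity \eqref{eq:energy-identity}, $\norm{u}{L^2}\le A_0$.
\item By \eqref{eq:U-bound}, $U_{T'}\le U_*$ for some $U_*=U_*(A_0,\norm{\omega_0}{L^2\cap L^\infty})$.
\item By \eqref{eq:current-X}, $J_{T'}\le C\sigma(c+U_*)3M_0=:J_*$, which also bounds $\norm{j}{L^\infty_{t,x}}$.
\item Since the velocity bound gives $\norm{u}{L^\infty_{t,x}}\le c-\delta_0/4$, each trajectory $X_a$ obeys \eqref{eq:curve-gap} with $\delta=\delta_0/4$. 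So \eqref{eq:G-bound} applies and gives $G_{T'}(\delta_0/4)\le C\delta_0^{-1/2}(M_0+cT'J_*)$.
\end{itemize}

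Next come the improvements, in order.
\begin{itemize}
\item From \eqref{eq:maxwell-gronwall}, $M_{T'}\le M_0\exp(C\sigma cTU_*)\le 2M_0$ once $T\le \log 2/(C\sigma cU_*)$.
\item From \eqref{eq:omega-infty}, $\Omega_{\infty,T'}\le \norm{\omega_0}{L^\infty}+T^{1/2}J_*C\delta_0^{-1/2}(M_0+cTJ_*)$. Choosing $T$ small makes the last term at most $1$.
\item Likewise \eqref{eq:omega-two} gives $\Omega_{2,T'}\le\norm{\omega_0}{L^2}+CTJ_*\cdot 3M_0\le \norm{\omega_0}{L^2}+1$ for small $T$.
\end{itemize}
These reproduce the first three conclusions, with strict improvement over the bootstrap thresholds.

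The main obstacle is the gap. Its smallness must come from a quantitative modulus of the $T'\downarrow0$ decay mentioned after \eqref{eq:gap-continuity}, and that modulus must depend only on the data.
\begin{itemize}
\item Integrating \eqref{eq:ut-bound} in time, using $\norm{b}{L^2}\le A_0$, gives
\[
\norm{u(t)-u_0}{L^2}\le T'\,C\bigl(U_*(\norm{\omega_0}{L^2}+2)+J_*A_0\bigr).
\]
\item Insert this into \eqref{eq:gap-continuity}, with the vorticity factor bounded by $2\norm{\omega_0}{L^\infty}+2$. This yields $\norm{u(t)-u_0}{L^\infty}\le C_*T^{1/2}$ with $C_*$ data dependent.
\item Choosing $T$ so that $C_*T^{1/2}\le\delta_0/2$ gives $\norm{u}{L^\infty_{t,x}}\le c-\delta_0+\delta_0/2=c-\delta_0/2$, which strictly improves $c-\delta_0/4$.
\end{itemize}

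One point needs care here: \eqref{eq:gap-continuity} relies on \eqref{eq:velocity-interpolation} applied to $u(t)-u_0$, which is divergence free and whose curl is $\omega(t)-\omega_0$, so it is legitimate. The time $T$ is the minimum of the finitely many smallness conditions above. None of them involves the solution, so $T$ depends only on $c,\sigma,\delta_0,\norm{u_0}{H^1},\norm{\omega_0}{L^\infty},M_0$; note that $\norm{\omega_0}{L^2}\le\norm{u_0}{H^1}$.

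Finally, at $t=0$ the weakened bounds hold strictly: $\norm{u_0}{L^\infty}\le c-\delta_0<c-\delta_0/4$, and the other quantities are at their initial values. Continuity then gives $T^*>0$. The strict improvement shows the weakened bounds cannot be saturated at $T^*<T$, so $T^*=T$, and the improved bounds \eqref{eq:bootstrap-conclusion} hold on $[0,T]$.
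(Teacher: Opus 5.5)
Your proposal is correct and follows essentially the same continuity argument as the paper. Under the bootstrap hypotheses you bound $U_{T'}$, $J_{T'}$ and $G_{T'}$ by data-dependent constants, improve $M$, $\Omega_2$ and $\Omega_\infty$ through \eqref{eq:maxwell-gronwall}, \eqref{eq:omega-two} and \eqref{eq:omega-infty}, and recover the light-speed gap by integrating \eqref{eq:ut-bound} in time and inserting the result into \eqref{eq:gap-continuity}; the only difference is bookkeeping (you bootstrap the looser thresholds $3M_0$, $+2$, $c-\delta_0/4$ up to the stated bounds, whereas the paper assumes the stated bounds and improves them to $\tfrac32 M_0$, $+\tfrac12$, $c-\tfrac34\delta_0$), which is immaterial.
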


\begin{proof}
We bootstrap the assertions 
\begin{equation}\label{eq:bootstrap-assumptions}
 M_{T'}\le2M_0,\quad
 \Omega_{2,T'}\le\norm{\omega_0}{L^2}+1,\quad
 \Omega_{\infty,T'}\le\norm{\omega_0}{L^\infty}+1,\quad
 \norm{u}{L^\infty_{t,x}}\leq c-\frac{\delta_0}{2},
\end{equation}
for $0< T' <T$.
To that end, setting
\begin{equation*}
	K\bydef 1+c+A_0+\norm{\omega_0}{L^2}
 +A_0^{\frac12}\bigl(1+\norm{\omega_0}{L^\infty}\bigr)^{\frac12}
\end{equation*}
yields, in view of  \eqref{eq:U-bound} and \eqref{eq:current-X}, that 
$$U_{T'}\leq CK$$ 
and
\begin{equation*}
	J_{T'}+\norm{j}{L^\infty_{t,x}}
 \leq J_*\bydef C\sigma(c+K)M_0.
\end{equation*}
Moreover, \eqref{eq:G-bound} evaluated at $\delta=\nicefrac{ \delta_0}2$ yields that 
\begin{equation*}
	 G_{T'}(\tfrac{\delta_0}{2})
 \leq G_*\bydef  C\delta_0^{-\frac12}(M_0+cJ_*),
\end{equation*}
as long as $T'\le1$.  Finally, \eqref{eq:ut-bound} is bounded by
\begin{equation*}
	Q_*\bydef \left(c(1+\norm{\omega_0}{L^2})+J_*A_0\right).
\end{equation*}

Now, we choose $T\le1$ so small that
\begin{equation*}
 C\sigma cKT\le\log(\tfrac32),\qquad
 CTJ_*M_0\le\tfrac12,\qquad
 CT^{\frac12}J_*G_*\le\tfrac12,
\end{equation*}
and
\begin{equation}\label{eq:T-gap-choice}
 C(TQ_*)^{\frac12}
 \bigl(1+2\norm{\omega_0}{L^\infty}\bigr)^{\frac12}
 \le\frac{\delta_0}{4}.
\end{equation}
Accordingly, estimates \eqref{eq:maxwell-gronwall}, \eqref{eq:omega-two}, and \eqref{eq:omega-infty} improve the first three bounds in
\eqref{eq:bootstrap-assumptions}. Moreover, estimates \eqref{eq:ut-bound},
\eqref{eq:gap-continuity}, and \eqref{eq:T-gap-choice} give
\begin{equation*}
	\norm{u(t)}{L^\infty}
 \leq c-\delta_0+\frac{\delta_0}{4}
 <c-\frac{\delta_0}{2},
\end{equation*}
which improves the fourth bound in \eqref{eq:bootstrap-assumptions}. All in all, continuity in $T'$ closes the bootstrap argument, ensuring the validity of \eqref{eq:bootstrap-conclusion} up to some time $T>0$.  This completes the proof.
\end{proof}

\section{Existence by a structure-preserving approximation}
\label{sec:existence}
In this section, we discuss the details of the approximation used to construct the solution claimed by Theorem \ref{thm:main}, based on the a priori estimates established in Proposition \ref{prop:bootstrap}. Although the overall analysis laid out here has become standard by now, we believe that it is important to make sure we provide a complete justification of our a priori estimates. More specifically, the approximation used in
\cite[Section~3.2]{ArsenioHouamedAPDE} adds a vanishing viscosity to
the velocity equation. Although this regularization is convenient for
compactness, it destroys the exact transport structure of the approximate
vorticity equation, which is essential to the trajectory-based argument
used here.

\begin{proof}[Proof of Theorem \ref{thm:main}]
	
We begin by fixing a nonnegative, radial, even function $\rho\in C_c^\infty(\mathbb R^2)$, with a unite-integral.  We then introduce, for $n\in \mathbb N^*$, the mollifier  setting 
$$\rho_n(\cdot)=n^2\rho(n\cdot)\quad  \text{ and } \quad 
\cJ_nf=\rho_n*f.$$
  Thus, $\cJ_n$ is self-adjoint, commutes with derivatives
and $\mathbb P$, and is uniformly bounded on all spaces used before. Moreover, positivity also gives that 
\begin{equation*}
	 \|\cJ_nu_0\|_{L^\infty}\le\|u_0\|_{L^\infty},\qquad
 \|\cJ_n\omega_0\|_{L^\infty}\le\|\omega_0\|_{L^\infty},
\end{equation*}
and convolution contracts the remaining initial norms.  Therefore, the
approximating data have the same gap $\delta_0$, and the common time furnished by Proposition \ref{prop:bootstrap} is independent of $n$.

For the mollified initial data $\cJ_n(u_0,E_0,b_0)$, we consider the approximate system
\begin{equation}\label{eq:approx-system}
\left | ~
\begin{aligned}
 \partial_tu_n+u_n\cdot\nabla u_n+\nabla p_n
 &=\cJ_n(j_n\times B_n),
 &\diver u_n&=0,\\
 \partial_tE_n-c\mathcal Db_n&=-cj_n,
 &\diver E_n&=0,\\
 \partial_tb_n+c\operatorname{curl}_2E_n&=0,\\
 j_n&=\sigma\left(cE_n+
 \mathbb P((\cJ_nu_n)\times B_n)\right),
 &\diver j_n&=0,
\end{aligned}
\right.
\end{equation}
where we denote $B_n=b_ne_3$. Given the mollified source terms, the existence of a unique global solution of this system, for each fixed $n\in \mathbb N^*$, can be done in a routine way. Indeed, the Maxwell $H^m$ energy is bounded on finite time intervals, for every $m\geq 0$, because
\begin{equation*}
	\norm{\cJ_nu_n}{W^{m,\infty}}
 \leq C_{m,n}\norm{u_n}{L^2},
\end{equation*}
and the energy (see 
\eqref{eq:approx-energy}, below) controls the right-hand side, above.  Also, it happens that
\begin{equation*}
 \norm{\curl\cJ_n(j_n\times B_n)}{L^\infty}
 \leq C_n\norm{j_n\times B_n}{L^1}
 \leq C_n\norm{j_n}{L^2}\norm{B_n}{L^2},
\end{equation*}
whose time integral is finite on each bounded interval by the energy.  The usual two-dimensional vorticity continuation criterion and high-order energy estimates then prevent finite-time loss of smoothness.   

\subsection*{Exact cancellation and uniform vorticity control}

Self-adjointness gives that 
\begin{align*}
 \int u_n\cdot\cJ_n(j_n\times B_n)d  x
 &=\int \cJ_nu_n\cdot(j_n\times B_n)d  x\\
 &=-\int j_n\cdot((\cJ_nu_n)\times B_n)d  x\\
 &=-\int j_n\cdot\mathbb P((\cJ_nu_n)\times B_n)d  x.
\end{align*}
Consequently, \eqref{eq:approx-system} satisfies the exact energy identity
\begin{equation}\label{eq:approx-energy}
 \frac12\frac{d }{d  t}\norm{(u_n,E_n,b_n)}{L^2}^2
 +\frac1\sigma\norm{j_n}{L^2}^2=0.
\end{equation}
Furthermore, the approximate vorticity $\omega_n \bydef \curl_2 u_n$ is governed by the transport equation 
\begin{equation*}
 (\partial_t+u_n\cdot\nabla)\omega_n
 =-\cJ_n(j_n\cdot\nabla b_n).
\end{equation*}
Now, let $X_n(t,a)$ be the volume-preserving flow of $u_n$.  Since
\begin{equation*}
 \cJ_n(j_n\cdot\nabla b_n)(X_n(t,a))
 =\int\rho_n(y)j_n(t,X_n(t,a)-y)
 \cdot\nabla b_n(t,X_n(t,a)-y)d  y,
\end{equation*}
and every curve $t\mapsto X_n(t,a)-y$ has the same speed as $X_n(t,a)$, it is readily seen that the proof of \eqref{eq:omega-infty}, followed by integration in
$y$, applies without change and gives that
\begin{equation*}
 \norm{\omega_n}{L^\infty_TL^\infty}
 \le\norm{\cJ_n\omega_0}{L^\infty}
 +CT^{1/2}\norm{j_n}{L^\infty_{t,x}}
 G_{n,T}(\delta),
\end{equation*}
which is the analogue of \eqref{eq:omega-infty} for the approximate solution of \eqref{eq:approx-system}. 

Additionally, the $L^2$ estimate also follows because $\cJ_n$ is bounded on $L^2$.
Moreover, $\cJ_nu_n$ may replace $u_n$ in Lemma \ref{lem:product}, since
\begin{equation*}
	\norm{\cJ_nu_n}{L^\infty\cap \dot H^1(\mathbb R^2)}
 \le\norm{u_n}{L^\infty\cap \dot H^1(\mathbb R^2)}.
\end{equation*}
Accordingly, we deduce that all estimates from Section \ref{sec:apriori} are thus uniform in $n$.

\subsection*{Compactness and passage to the limit}

Let $T$ be supplied by Proposition \ref{prop:bootstrap}. The previous estimates provide us with the assertions (uniformly in $n$) that
\begin{equation}\label{eq:uniform-bounds}
\begin{aligned}
 &u_n\text{ is bounded in }L^\infty(0,T;H^1),
 &&\omega_n\text{ is bounded in }L^\infty(0,T;L^2\cap L^\infty),\\
 &(E_n,b_n)\text{ is bounded in }L^\infty(0,T;H^{\nicefrac32}), \quad 
 &&j_n\text{ is bounded in }L^\infty(0,T;H^{\nicefrac32}) .
\end{aligned}
\end{equation}
The equations further ensure that
\begin{equation*}
 \partial_tu_n\text{ is bounded in }L^\infty(0,T;L^2),
 \qquad
 \partial_t(E_n,b_n)\text{ is bounded in }L^\infty(0,T;H^{\nicefrac12}),
\end{equation*}
uniformly in $n$, which follow by employing the bounds in \eqref{eq:uniform-bounds} to control the nonlinear terms from \eqref{eq:approx-system}. 

Hence, Aubin--Lions--Simon \cite{Simon1986} compactness lemma provides a
subsequence such that
\begin{equation*}
 (u_n,E_n,b_n)\longrightarrow(u,E,b)
 \quad\text{strongly in }C([0,T];L^2_{\mathrm{loc}}).
\end{equation*}
The same lemma also gives strong convergence in lower local Sobolev spaces. Finally, Fatou's property and weak lower semicontinuity give all bounds on the solution as stated in Theorem \ref{thm:main}.  This said, one can show in a routine way that $(u,E,b)$ solves \eqref{eq:EM-intro} in the sense of distributions.

In addition to that, the weak limit of $\omega_n$ belongs to
$L^\infty_t(L^2\cap L^\infty)$ and, since
\begin{equation*}
	j\cdot\nabla b\in L^\infty(0,T;L^2),
\end{equation*}
and $u$ is a Yudovich velocity, the transport equation
\eqref{eq:vorticity-projected} is renormalized.  Its measure-preserving Yudovich flow
satisfies (a.e. in $t,x$)
\begin{equation*} 
 \omega(t,X(t,x))=\omega_0(x)
 -\int_0^t(j\cdot\nabla b)(s,X(s,x))d  s.
\end{equation*} 
Since the source belongs to $L^1(0,T;L^2)$, measure preservation and
Minkowski's inequality give  that $\omega\in C([0,T];L^2)$. 
At last, the mild Maxwell formula and Lemma \ref{lem:product} show that its
source belongs to $L^1(0,T;H^{\nicefrac32})$.  Thus, by strong continuity of the free Maxwell group on $H^{3/2}$,  we deduce that 
$(E,b)\in C([0,T];H^{\nicefrac32})$.  The gap
\eqref{eq:gap-preserved} and trace estimate \eqref{eq:trace-main} follow by
the already established bounds and Corollary \ref{cor:maxwell-trace}.  
  This completes the existence part of Theorem \ref{thm:main}.

As for the uniqueness part, we emphasize that the uniqueness statement already proved in \cite{ArsenioHouamedAPDE} (see more precisely the proof of Theorem 3.2 therein in page 1370) covers the function spaces involved in Theorem \ref{thm:main}. This concludes the proof of the theorem.
\end{proof}

\section{The building blocks for the charged system}\label{sec:ill-posedness}
This section is devoted to   introducing the required tools and results to establish  the norm inflation phenomenon for the charged system \eqref{eq:unprojected-system}.  First, we recall the standard notation for the cylindrical basis in three-dimensions
\begin{equation*}
	e_r\bydef (\cos\theta,\sin\theta,0),
 \qquad
 e_\theta\bydef (-\sin\theta,\cos\theta,0).
\end{equation*}
In this basis, we seek radial solutions to \eqref{eq:unprojected-system} of the form
\begin{equation}\label{eq:radial-ansatz}
 u(t,x)=V(t,r)e_\theta,
 \qquad
 E(t,x)=R(t,r)e_r+S(t,r)e_\theta,
 \qquad
 B(t,x)=b(t,r)e_3,
\end{equation}
where $r=|x|$.  Note in passing that such velocity and magnetic field are automatically divergence-free, and the vorticity in this case is given by
\begin{equation}\label{eq:radial-vorticity}
\omega(t,r)=\frac1r\partial_r\bigl(rV(t,r)\bigr)
 =\partial_rV(t,r)+\frac{V(t,r)}r.
\end{equation}
 For this ansatz, the next lemma equivalently reduces the Euler--Maxwell system \eqref{eq:unprojected-system} to a simpler, partially coupled, system of equations.

\begin{lemma}[Radial equations]\label{lem:radial-equations}
Smooth vector fields of the form \eqref{eq:radial-ansatz} solve
\eqref{eq:unprojected-system} if and only if
\begin{equation}\label{eq:radial-system}
\left |~
\begin{aligned}
 \partial_tV&=-\sigma b(cR+bV),\\
 \partial_tR&=-\sigma c(cR+bV),\\
 \partial_tS&=-c\,\partial_rb-\sigma c^2S,\\
 \partial_tb&=-c\left(\partial_rS+\frac Sr\right), 
\end{aligned}
\right.\tag{Rad-eq}
\end{equation}
and the pressure is recovered, up to a function of time, through
\begin{equation}\label{eq:pressure}
 \partial_rp=\frac{V^2}{r}+\sigma cbS.
\end{equation}
\end{lemma}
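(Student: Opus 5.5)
The plan is to substitute the ansatz \eqref{eq:radial-ansatz} into \eqref{eq:unprojected-system} and compute every term in the polar frame. We will use the identities $\partial_\theta e_r=e_\theta$ and $\partial_\theta e_\theta=-e_r$, together with the cross products $e_r\times e_3=-e_\theta$ and $e_\theta\times e_3=e_r$. The needed differential identities for radial scalars $f(r)$ are:
\begin{equation*}
\nabla\times(f e_3)=-f'(r)e_\theta,\qquad \curl_2(fe_\theta)=f'+\tfrac fr,\qquad \diver(fe_\theta)=0,\qquad \diver(fe_r)=f'+\tfrac fr,
\end{equation*}
and
\begin{equation*}
(fe_\theta\cdot\nabla)(fe_\theta)=-\tfrac{f^2}{r}e_r .
\end{equation*}
The last identity is the centripetal term. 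With these in hand, $\diver u=\diver B=0$ hold automatically, as already noted.

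\textbf{Maxwell part.} Ohm's law gives
\begin{equation*}
j=\sigma\bigl(cE+Vb\,e_\theta\times e_3\bigr)=\sigma(cR+bV)e_r+\sigma cS\,e_\theta .
\end{equation*}
Faraday's law, using $\nabla\times E=\curl_2(Re_r+Se_\theta)e_3=(S'+S/r)e_3$ (the radial part $Re_r$ is a gradient and so curl-free), yields the fourth equation of \eqref{eq:radial-system}. Ampère's law reads $\frac1c\partial_tE=-b'(r)e_\theta-j$. Projecting onto $e_r$ gives $\partial_tR=-\sigma c(cR+bV)$, and projecting onto $e_\theta$ gives $\partial_tS=-cb'-\sigma c^2S$. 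These are the second and third equations.

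\textbf{Euler part.} The Lorentz force is
\begin{equation*}
j\times B=\sigma(cR+bV)b\,(e_r\times e_3)+\sigma cSb\,(e_\theta\times e_3)=-\sigma b(cR+bV)e_\theta+\sigma cbS\,e_r .
\end{equation*}
The convective term is $-\frac{V^2}{r}e_r$, and for a radial pressure $\nabla p=\partial_rp\,e_r$. The $e_\theta$ component of the Euler equation then gives the first equation of \eqref{eq:radial-system}. The $e_r$ component gives $-\frac{V^2}{r}+\partial_rp=\sigma cbS$, which is \eqref{eq:pressure}.

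\textbf{Converse direction.} Conversely, if \eqref{eq:radial-system} holds, define $p$ by integrating \eqref{eq:pressure} in $r$. The right-hand side of \eqref{eq:pressure} is a smooth radial function, so $p$ is a well-defined radial function, determined up to a function of time. Reversing the component computations then recovers \eqref{eq:unprojected-system}.

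The only delicate point is the pressure in the forward direction. If a general $p$ solves the system, we must show it is radial, so that $\nabla p$ has no $e_\theta$ component that could spoil the first equation. For this, note that every term of the Euler equation other than $\nabla p$ is of the form $f_1(r)e_r+f_2(r)e_\theta$. Hence $\nabla p$ is also of this form, and taking the curl gives $\frac1r\partial_r(rf_2)=0$. Smoothness at the origin then forces $f_2=0$, so $\partial_\theta p=0$. A second point is regularity at $r=0$, where the terms $S/r$ and $V^2/r$ appear. These are harmless because smooth fields of the ansatz form vanish at the origin to odd order in $r$ in their $V,R,S$ components. I expect no step to be a real obstacle; the main care needed is tracking signs in the cross products.
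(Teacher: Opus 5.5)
Your proposal is correct and follows essentially the same route as the paper: you substitute the ansatz, compute $j=\sigma(cR+bV)e_r+\sigma cS\,e_\theta$ and $j\times B=\sigma cbS\,e_r-\sigma b(cR+bV)e_\theta$, and read off the $e_r$ and $e_\theta$ components of the Euler, Amp\`ere and Faraday equations, and all your signs match the paper's. Two of your points go beyond what the paper writes, which implicitly takes $\nabla p=\partial_rp\,e_r$ and does not spell out the converse: your argument that the pressure must be radial (from $\frac1r\partial_r(rf_2)=0$ together with smoothness at the origin), and your explicit converse direction.
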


\begin{proof}
First, we recall the standard identities for the three-dimensional cylindrical basis
\begin{equation*}
	e_r\times e_3=-e_\theta,
 \qquad e_\theta\times e_3=e_r,
\end{equation*}
which we constantly employ hereafter without recalling. To start, 
 observing that $ u\times B=bV e_r$, we infer from Ohm's   law in \eqref{eq:unprojected-system} that
\begin{equation}\label{eq:radial-current}
 j=j_re_r+j_\theta e_\theta,
 \qquad
 j_r=\sigma(cR+bV),
 \qquad
 j_\theta=\sigma cS.
\end{equation}
Therefore, it follows that
\begin{equation*}
	 j\times B=bj_\theta e_r-bj_re_\theta.
\end{equation*}
Furthermore, noting that the advection term reduces to
\begin{equation*}
	 u\cdot\nabla u=-\frac{V^2}{r}e_r,
\end{equation*}
the tangential and radial components of Euler's equation thus produce the equations
\begin{equation*}
	\partial_tV=-bj_r,
 \qquad
 -\frac{V^2}{r}=-\partial_rp+bj_\theta.
\end{equation*}
Using \eqref{eq:radial-current} yields the first equation in
\eqref{eq:radial-system} and \eqref{eq:pressure} altogether.

On the other hand, recalling, for a radial scalar $b$, that
\begin{equation*}
	\nabla\times(be_3)=-\partial_rb\,e_\theta,
\end{equation*}
the radial and tangential components of Amp\`ere's equation thus produce in turn the equations
\begin{equation*}
	\frac1c\partial_tR=-j_r,
 \qquad
 \frac1c\partial_tS+\partial_rb=-j_\theta,
\end{equation*}
which give the second and third equations in \eqref{eq:radial-system}.
Finally, writing that
\begin{equation*}
	\nabla\times E
 =\left(\partial_rS+\frac Sr\right)e_3,
\end{equation*}
  Faraday's equation yields the fourth equation in \eqref{eq:radial-system}. This concludes the proof of the lemma.
\end{proof}

The crucial feature of \eqref{eq:radial-system} is its triangular structure.
The transverse variables $(S,b)$ solve a closed linear system, independently of
$(V,R)$.  Once $b$ has been determined, $(V,R)$ are recovered, at each radius, by the
linear system
\begin{equation}\label{eq:longitudinal-matrix}
 \partial_t
 \begin{pmatrix}V\\  R\end{pmatrix}
 =-\sigma
 \begin{pmatrix}
  b^2&bc\\bc&c^2
 \end{pmatrix}
 \begin{pmatrix}V\\ R\end{pmatrix}.
\end{equation}

Let us next discuss the pathway of constructing a smooth solution of \eqref{eq:radial-system}-\eqref{eq:pressure} that will serve the purpose of exhibiting a norm inflation phenomenon for \eqref{eq:unprojected-system}. We first begin with a basic consequence of the finite speed of propagation for the transverse variables $(S,b)$. To that end, it is simpler to introduce the corresponding constant-coefficient damped Maxwell system 
\begin{equation}\label{eq:TR:rad}
	\left| ~
	\begin{aligned}
		\partial_tE^{\mathrm{tr}}&=c\nabla\times B-\sigma c^2E^{\mathrm{tr}},
		\\
		\partial_t B&=-c\nabla\times E^{\mathrm{tr}}
		\\
		 (E^{\mathrm{tr}}, B)&= (Se_\theta, be_3)
	\end{aligned}
	\right.\tag{Max-tr}
\end{equation} 
and discuss the finite speed of propagation property for the latter reformulation.

\begin{lemma}\label{lemma:SB:finite-speed}
	Suppose that the radial profiles $S_0,b_0$ generate smooth compactly supported Cartesian fields of the form \eqref{eq:radial-ansatz}.  Then \eqref{eq:TR:rad} has a global smooth solution that remains compactly supported on every finite time interval.
\end{lemma}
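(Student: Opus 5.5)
We treat \eqref{eq:TR:rad} as a linear constant-coefficient damped Maxwell system on $\mathbb R^2$ in the normal geometry. We obtain existence, smoothness and compact support from the Cartesian formulation, and afterwards check that the radial ansatz is preserved.

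\emph{Existence and smoothness.} In Cartesian variables the system reads
\begin{equation*}
\partial_tE^{\mathrm{tr}}=c\mathcal Db-\sigma c^2E^{\mathrm{tr}},\qquad \partial_tb=-c\operatorname{curl}_2E^{\mathrm{tr}}.
\end{equation*}
This is a symmetric hyperbolic system with a zero-order damping term. For data in $H^m$ for every $m$, the $H^m$ energy identity
\begin{equation*}
\tfrac12\tfrac{d}{dt}\bigl(\|E^{\mathrm{tr}}\|_{H^m}^2+\|b\|_{H^m}^2\bigr)+\sigma c^2\|E^{\mathrm{tr}}\|_{H^m}^2=0
\end{equation*}
holds, because the curl terms cancel by antisymmetry. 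Standard linear theory, for instance Fourier analysis, then gives a unique global solution in $C(\mathbb R;H^m)$ for all $m$. By Sobolev embedding, and using the equation to trade time derivatives for space derivatives, this solution is smooth.

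\emph{Preservation of the ansatz.} The system commutes with rotations, in the sense of $R^{-1}F(Rx)$ for the planar field and $b(Rx)$ for the scalar. The rotated solution therefore solves the same system with the same data, and uniqueness shows that rotational invariance is preserved. This yields the form $(S e_\theta+\tilde R e_r,\, b e_3)$. The radial component $\tilde R$ satisfies $\partial_t\tilde R=-\sigma c^2\tilde R$, since $\mathcal D b$ is purely angular, and $\tilde R(0)=0$, so $\tilde R\equiv0$. Hence the solution stays of the form \eqref{eq:TR:rad}.

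\emph{Finite speed of propagation.} This is the key step. Fix $x_0$, $T>0$, and consider the backward cone $K=\{(t,x):|x-x_0|\le \rho-ct,\ 0\le t\le T\}$. Multiply the equations by $(E^{\mathrm{tr}},b)$ and integrate over the ball $\mathcal B_{\rho-ct}(x_0)$. The local energy density $e=\tfrac12(|E^{\mathrm{tr}}|^2+b^2)$ satisfies
\begin{equation*}
\partial_te+c\,\diver(b\,E^{\mathrm{tr},\perp}\text{-type flux})=-\sigma c^2|E^{\mathrm{tr}}|^2 ,
\end{equation*}
with flux of size at most $c\,|E^{\mathrm{tr}}||b|\le c\,e$; the precise flux is $-c\,bE^{\mathrm{tr}}$ rotated, coming from $\diver(bE^\perp)$-type identities. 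Differentiating $\int_{\mathcal B_{\rho-ct}}e$ produces the boundary term $-c\int_{\partial\mathcal B}e$. This term dominates the flux contribution, so the localized energy is nonincreasing. If the data vanish outside $\mathcal B_{R_0}$, applying this to every ball $\mathcal B_{\rho}(x_0)$ with $\mathcal B_\rho(x_0)\cap\mathcal B_{R_0}=\emptyset$ shows that the solution vanishes outside $\mathcal B_{R_0+ct}$ at time $t$. Hence it remains compactly supported on every finite time interval.

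The only point needing care is the sign bookkeeping in the flux bound. The damping term has a favorable sign and can simply be dropped.
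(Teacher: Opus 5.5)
Your proof is correct and follows the same route as the paper. Global smooth existence comes from the energy/Fourier theory of the damped symmetric hyperbolic system, and finite speed of propagation comes from the local energy identity $\partial_t\mathcal E+c\,\diver(E^{\mathrm{tr}}\times B)=-\sigma c^2|E^{\mathrm{tr}}|^2$ integrated over shrinking balls, using $|E^{\mathrm{tr}}\times B|\le\mathcal E$; your ``rotated $bE^{\mathrm{tr}}$'' flux is exactly $E^{\mathrm{tr}}\times B=-bE^{\mathrm{tr},\perp}$. Your extra step is a useful addition: you show the solution keeps the form $(Se_\theta,be_3)$ via rotational equivariance, uniqueness, and $\partial_t\tilde R=-\sigma c^2\tilde R$, whereas the paper only invokes this implicitly, later, in the proof of Lemma~\ref{lem:global-radial}.
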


\begin{proof}
	The standard Fourier representation, or equivalently the energy method for a symmetric hyperbolic system with a zeroth-order damping term, produces a unique global smooth solution from smooth data.  Moreover, finite speed of  propagation also entails that the support of this solution remains compact on every bounded time interval, as soon as this is assumed to hold initially. Indeed, and more precisely, this follows from the fact that, if the smooth initial data is identically zero on  some ball $ \mathcal B_{r_0}(x_0)$, then  $( E^{\mathrm{tr}}, B)(t,x) =0$ 
whenever 
\begin{equation*}
	|x-x_0| + ct <r_0,
\end{equation*}
 To see this, we first write the local energy identity  
 \begin{equation*}
 	 \partial_t \mathcal E  + c \nabla \cdot (E^{\mathrm{tr}} \times B) = -\sigma c^2|E^{\mathrm{tr}}|^2, \qquad \mathcal E (t,x) \bydef \frac{1}{2} |( E^{\mathrm{tr}}, B)(t,x)|^2,
 \end{equation*}
 for all $(t,x)\in \mathbb R\times \mathbb R^2$. Then, introducing, for all $t\in [0,r_0/c)$, the ball 
 \begin{equation*}
 	\Omega_t \bydef \mathcal B_{\rho (t)}(x_0), \qquad \rho(t)\bydef r_0 -ct,
 \end{equation*}
 and using Reynolds transport formula (which can be proved directly in the case of the shrinking ball $\Omega_t$ through polar coordinates) 
 \begin{equation*}
 	\frac{d}{dt} \int_{\Omega_t} \mathcal E (t,x) dx = \int_{\Omega_t}  \partial_t\mathcal E (t,x) dx -c  \int_{\partial \Omega_t} \mathcal E (t,\sigma) d\sigma ,
 \end{equation*}
 we obtain, after integrating the local energy identity over the ball $\Omega_t$ and using the divergence formula, that 
 \begin{equation*}
 	\frac{d}{dt} \int_{\Omega_t} \mathcal E (t,x) dx= - c  \int_{\partial \Omega_t} \mathcal E (t,\sigma) d\sigma - c  \int_{\partial \Omega_t}  (E^{\mathrm{tr}} \times B)\cdot \vec n(t,\sigma) d\sigma -\sigma c^2 \int_{\Omega_t} |E^{\mathrm{tr}} (t,x)|^2 dx.
 \end{equation*}
Now, observe that the boundary flux satisfies the pointwise  bound 
\begin{equation*}
	| (E^{\mathrm{tr}} \times B)\cdot \vec n |  \leq | E^{\mathrm{tr}}| |B| \leq \mathcal E,
\end{equation*}
 whence
 \begin{equation*}
 	- c  \int_{\partial \Omega_t} \mathcal E (t,\sigma) d\sigma - c  \int_{\partial \Omega_t}  (E^{\mathrm{tr}} \times B)\cdot \vec n(t,\sigma) d\sigma\leq 0.
 \end{equation*}
 All in all, we deduce that 
 \begin{equation*}
 	\int_{\Omega_t} \mathcal E (t,x) dx \leq \int_{\mathcal B _{r_0}} \mathcal E (0,x) dx=0,
 \end{equation*}
 for all $t\in [0,r_0/c)$. This establishes the desired finite speed of propagation property, which eventually yields the persistence of the compact-support property of the initial data by a straightforward argument. More precisely, utilizing the argument above, one can show that if the initial data is supported inside some compact set $K\subset \mathbb R^2$, then for every time $t>0$, it holds that 
 \begin{equation*}
 	\supp (E^{\mathrm{tr}}(t,\cdot), B(t,\cdot)) \subset K + \overline {\mathcal B}_{ct}.
 \end{equation*}
 This completes the proof of the lemma.
\end{proof}

We now build on our previous findings for the transverse variables, and provide a full statement on the global dynamics of the complete system of equations \eqref{eq:radial-system}.

\begin{lemma}[Global smooth radial dynamics]\label{lem:global-radial} Suppose that the radial profiles $V_0,R_0,S_0,b_0$ generate smooth compactly supported Cartesian fields of the form \eqref{eq:radial-ansatz}.  Then \eqref{eq:radial-system} has a global smooth solution, and \eqref{eq:pressure} defines a smooth pressure.  If $V_0=R_0=0$ at some  fixed radius $r$, then $V(t,r)=R(t,r)=0$ for every $t\ge0$.
\end{lemma}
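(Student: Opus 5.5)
The triangular structure of \eqref{eq:radial-system} suggests solving it in two stages.

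\emph{Step 1: the transverse pair.} Since $(S,b)$ solves a closed linear system, we apply Lemma \ref{lemma:SB:finite-speed} to the Cartesian fields $(S_0e_\theta,b_0e_3)$. This gives a global smooth, compactly supported solution $(E^{\mathrm{tr}},B)$ of \eqref{eq:TR:rad}. We must check that it keeps the form $(Se_\theta,be_3)$ with radial profiles $S,b$. For this we use uniqueness together with rotation invariance of \eqref{eq:TR:rad}: conjugating the solution by any rotation $\mathcal R$ gives another solution with the same data, so the solution is rotation-equivariant. For the $e_r$ component we note that the divergence of $E^{\mathrm{tr}}$ is conserved, since $\partial_t\diver E^{\mathrm{tr}}=-\sigma c^2\diver E^{\mathrm{tr}}$. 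Because $\diver(S_0e_\theta)=0$, the divergence stays zero; a rotation-equivariant planar field with zero divergence and compact support has vanishing radial component. Indeed its flux through every circle vanishes, so $2\pi rR=0$. Hence $E^{\mathrm{tr}}=Se_\theta$ and $B=be_3$ with smooth $S,b$ depending only on $(t,r)$. By the computations of Lemma \ref{lem:radial-equations}, they satisfy the last two equations of \eqref{eq:radial-system}.

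\emph{Step 2: the longitudinal pair.} With $b$ now known and smooth, $(V,R)$ solves the pointwise linear ODE \eqref{eq:longitudinal-matrix}. The coefficient matrix $-\sigma\begin{pmatrix}b^2&bc\\bc&c^2\end{pmatrix}$ is smooth and locally bounded in $(t,r)$, so the linear ODE has a unique global solution for each $r$. Moreover, the matrix is symmetric and nonpositive, so
\[
\tfrac12\partial_t(V^2+R^2)=-\sigma(cR+bV)^2\le0 .
\]
This gives global bounds at once. The last assertion of the lemma is then just uniqueness for the linear ODE: zero data at a fixed $r$ give the zero solution there.

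\emph{Step 3: smoothness of the Cartesian fields.} This is the main obstacle. We must show that $V(t,|x|)e_\theta$ and $R(t,|x|)e_r$ are smooth on $\mathbb R^2$ and compactly supported, including at the origin. I would avoid profile parity arguments and work directly in Cartesian variables. Write $u=V e_\theta$ and $E^{\mathrm{lo}}=Re_r$. Since $b(t,\cdot)$ is a smooth function on $\mathbb R^2$, the system
\[
\partial_t u=-\sigma b\,(cE^{\mathrm{lo}}+b u^{\ast}),\qquad \partial_tE^{\mathrm{lo}}=-\sigma c\,(cE^{\mathrm{lo}}+bu^\ast),
\]
is a linear ODE with smooth coefficients, parametrized smoothly by $x$. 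Here $u^\ast$ denotes the rotation by $-\pi/2$ of $u$, so that $(u^\ast)\cdot e_r=V$; concretely $u^\ast=(u_2,-u_1)$. Since the data $V_0e_\theta$ and $R_0e_r$ are smooth in $x$, smooth dependence of ODE solutions on parameters gives smooth fields in $(t,x)$. Uniqueness shows they keep the forms $Ve_\theta$ and $Re_r$.

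\emph{Step 4: compact support and the pressure.} Compact support follows because the data vanish outside a ball. There the solution is zero, by the last assertion applied at those radii. Finally, we define $p(t,r)=-\int_r^\infty\bigl(V^2/s+\sigma cbS\bigr)\,ds$. The integrand vanishes for large $s$, and $p$ is smooth as a Cartesian function because
\[
\nabla p=\Bigl(\tfrac{V^2}{r}+\sigma cbS\Bigr)e_r=-u\cdot\nabla u+(j\times B)\cdot e_re_r .
\]
This is a smooth vector field on $\mathbb R^2$, and it is curl-free by radial symmetry. The reverse direction of Lemma \ref{lem:radial-equations} then shows that these fields solve \eqref{eq:unprojected-system}.
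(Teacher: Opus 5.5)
\paragraph{Verdict.} Your argument is correct in substance, and for the key regularity issue it takes a different route from the paper.

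\paragraph{How the paper handles the origin.} It uses the parity representation $V_0=r\,v_0(r^2)$, $R_0=r\,q_0(r^2)$, $S_0=r\,s_0(r^2)$, $b_0=k_0(r^2)$. It then asserts that the transverse flow preserves $S=r\,s(t,r^2)$ and $b=k(t,r^2)$. Next it solves the longitudinal ODE with coefficients smooth in $(t,r^2)$, which gives $V=r\,v(t,r^2)$ and $R=r\,q(t,r^2)$. Radial smoothness comes from differentiating the ODE in $r$. Finally it observes that the right-hand side of the pressure equation has the form $r\,h(t,r^2)$.

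\paragraph{How your route differs.} You solve the longitudinal system directly as a linear ODE in $\mathbb R^4$, with coefficients smooth in the Cartesian parameter $x$. Smoothness up to $x=0$ then follows from smooth dependence on parameters, and uniqueness identifies this solution with the polar one. This avoids the Whitney-type representation of smooth radial fields in terms of $r^2$, which the paper invokes without proof.

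Your Step~1 also proves something the paper takes for granted when it writes \eqref{eq:TR:rad} with $E^{\mathrm{tr}}=Se_\theta$. Namely, you show the Cartesian transverse flow keeps this form, using rotation-equivariance and $\partial_t\diver E^{\mathrm{tr}}=-\sigma c^2\diver E^{\mathrm{tr}}$. Both routes are valid: yours is more self-contained, and the paper's is shorter.

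\paragraph{A displayed equation to fix.} With $u^\ast=(u_2,-u_1)$ you have $u^\ast=Ve_r$. So $cE^{\mathrm{lo}}+bu^\ast=(cR+bV)e_r$ is radial, and your first Cartesian equation makes $\partial_tu$ radial. This contradicts $u=Ve_\theta$. For the system as written, the uniqueness step (``they keep the forms $Ve_\theta$ and $Re_r$'') actually fails: from $u_0=0$, $R_0\neq0$, $b\neq0$ it produces a radial velocity.

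The correct equation is $\partial_tu=-\sigma b\bigl(c(E^{\mathrm{lo}})^\perp+bu\bigr)$, with $F^\perp=(-F_2,F_1)$, or equivalently $\partial_tu=-\sigma b\,(cE^{\mathrm{lo}}+bu^\ast)^\perp$. This is still linear with coefficients smooth in $(t,x)$, so your argument goes through unchanged.

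\paragraph{A point to justify in Step~4.} Smoothness of $\bigl((j\times B)\cdot e_r\bigr)e_r$ is not automatic, since projecting onto $e_r$ generally destroys smoothness at the origin. Here it holds because this field equals $\sigma c\,bS\,e_r=-\sigma c\,b\,(E^{\mathrm{tr}})^\perp$. More simply, $\nabla p=-\partial_tu-u\cdot\nabla u+j\times B$, which is visibly smooth once Step~3 is done.
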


\begin{proof} 
We have already discussed the existence and all the desired properties of $(S,b)$ in Lemma \ref{lemma:SB:finite-speed}, above. Thus, we only focus our attention here on the analysis for $(V,R)$. With $b$ fixed, the first two equations of $(V,R)$ from \eqref{eq:radial-system} are equivalently recast in the matrix form given by  \eqref{eq:longitudinal-matrix}, which is a smooth linear ODE at each
$r$.  This ODE cannot blow up in finite time and, in fact, one has that
\begin{equation}\label{eq:pointwise-dissipation}
 \frac12\partial_t(V^2+R^2)
 =-\sigma(bV+cR)^2\le0.
\end{equation}
Differentiating the ODE with respect to $r$, and proceeding inductively, shows
that all radial derivatives remain finite on bounded time intervals.  Additionally, the
homogeneous character of the ODE yields the support assertion.

For completeness, we also point out that the compatibility conditions at the polar-coordinate
origin are also preserved.  Indeed, smooth radial Cartesian data have the form
\begin{equation*}
	V_0(r)=r\,v_0(r^2),\qquad
  R_0(r)=r\,q_0(r^2),\qquad
 S_0(r)=r\,s_0(r^2),\qquad
   b_0(r)=k_0(r^2)
\end{equation*}
near $r=0$, for some smooth one-variable functions $v_0,q_0,s_0,k_0$.  Thus, rotational invariance and smoothness of the Cartesian transverse Maxwell flow preserve the corresponding representation
\begin{equation*}
	S(t,r)=r\,s(t,r^2),\qquad b(t,r)=k(t,r^2).
\end{equation*}
The coefficient matrix in \eqref{eq:longitudinal-matrix} then depends smoothly
on $(t,r^2)$, so uniqueness for the pointwise ODE yields
\begin{equation*}
	 V(t,r)=r\,v(t,r^2),\qquad R(t,r)=r\,q(t,r^2).
\end{equation*}
As for the pressure, it follows that the right-hand side of \eqref{eq:pressure} is of the form $r\,h(t,r^2)$, for some smooth function $h$.  It therefore integrates to a smooth radial pressure at $r=0$.  Since $\partial_rp$ is compactly supported at each fixed time, the pressure is globally smooth.  This completes the construction of a global smooth solution of the full system.
\end{proof}

We  conclude this section by highlighting two special consequences of the finite-speed of propagation proved in the previous lemma for $(S,b)$, and apply it thereafter to produce an explicit solution to the system of ODE governing the variables $(V,R)$ in a particular case of initial data. These consequences are stated in the next two corollaries., which are specifically considered the building blocks used in the ill-posedness proof of Theorem \ref{thm:main:2}, later on.

\begin{corollary} [Magnetic plateau]\label{cor:plateau}
Assume, for some $\beta\in \mathbb R$ and  $r_*>0$, that 
\begin{equation*}
	b_0(r)=\beta\quad \text{ and } \quad  S_0(r)=0,\qquad  \text{ for all }r\in [0, r_*].
\end{equation*}
Then the solution of the transverse subsystem satisfies 
\begin{equation*}
 b(t,r)=\beta\quad \text{ and } \quad S(t,r)=0,
 \qquad \text{ for all } r+ct<r_*.
\end{equation*}
\end{corollary}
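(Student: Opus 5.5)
The plan is to subtract off the constant magnetic state and apply the finite-speed-of-propagation argument from the proof of Lemma \ref{lemma:SB:finite-speed} to the difference. The key observation is that the constant field $(E^{\mathrm{tr}},B)=(0,\beta e_3)$ is an exact smooth solution of \eqref{eq:TR:rad}: the curl of a constant vanishes, and the damping term acts only on $E^{\mathrm{tr}}$, which is zero. Since \eqref{eq:TR:rad} is linear, the difference
\begin{equation*}
(\tilde E,\tilde B)\bydef(Se_\theta,(b-\beta)e_3)
\end{equation*}
again solves \eqref{eq:TR:rad}, is smooth, and by hypothesis vanishes identically at $t=0$ on the ball $\mathcal B_{r_*}$ (centered at the origin, since the profiles are radial in $r=|x|$).

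I would then rerun the local energy argument from the proof of Lemma \ref{lemma:SB:finite-speed} for $(\tilde E,\tilde B)$ on the shrinking balls $\Omega_t=\mathcal B_{r_*-ct}$, $t\in[0,r_*/c)$. The local energy identity $\partial_t\mathcal E+c\nabla\cdot(\tilde E\times\tilde B)=-\sigma c^2|\tilde E|^2$ holds pointwise for any smooth solution, so it does not depend on the global decay or compact support of the difference. This matters because $\tilde B$ tends to $-\beta$ at infinity and is therefore not compactly supported. The Reynolds formula, the boundary bound $|(\tilde E\times\tilde B)\cdot\vec n|\le\mathcal E$, and the sign of the damping term then give
\begin{equation*}
\int_{\Omega_t}\mathcal E(t)\,dx\le\int_{\mathcal B_{r_*}}\mathcal E(0)\,dx=0.
\end{equation*}
By continuity, $\tilde E=\tilde B=0$ on the open ball $\{|x|<r_*-ct\}$. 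In radial variables this reads $S(t,r)=0$ and $b(t,r)=\beta$ whenever $r+ct<r_*$, which is the claim.

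The only point that needs care is justifying that the constant state may be subtracted. The solution from Lemma \ref{lemma:SB:finite-speed} is built for compactly supported data, while a plateau with $\beta\neq0$ means $b_0$ equals $\beta$ near the origin but still vanishes at infinity. Hence the data themselves are compactly supported, and only the difference fails to be. The localized energy argument uses only smoothness of the solution on the backward cone, so this causes no difficulty. I would also remark that the constant state is smooth as a Cartesian field, so no compatibility issue arises at $r=0$.
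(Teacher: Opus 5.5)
Your proof is correct and is essentially the paper's argument. The paper likewise observes that $(S,b)=(0,\beta)$ is a solution of the transverse system, and then applies the finite-speed principle of Lemma~\ref{lemma:SB:finite-speed} to the difference, whose data vanish on $\mathcal B_{r_*}$. Your remark that the shrinking-ball energy argument needs only smoothness, and not compact support of the difference (which tends to $-\beta e_3$ at infinity), is a clarification the paper leaves implicit.
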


\begin{proof} 
The constant pair $(S,b)=(0,\beta)$ is a solution of the transverse system.
The difference between the given solution and this constant solution has
vanishing initial data in the ball $\mathcal B _{r_*}$.   Thus, the claimed conclusion follows from  the finite speed  principale proved for \eqref{eq:TR:rad} in Lemma \ref{lemma:SB:finite-speed}. 
\end{proof}

\begin{corollary} [Explicit longitudinal solution]\label{cor:explicit}
Suppose that $b(t,r)=\beta$ on a spacetime region $\Lambda_{c}$, containing the initial time, and that $V(0,r)=0$ there.
Then
\begin{equation*}
 V(t,r)
 =-\frac{\beta c}{c^2+\beta^2}
 \left(1-e^{-\sigma(c^2+\beta^2)t}\right)R_0(r)
\end{equation*}
at every point whose vertical time segment remains in that region.
\end{corollary}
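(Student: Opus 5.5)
Fix a radius $r$ whose vertical time segment $\{(s,r):0\le s\le t\}$ lies in $\Lambda_c$. Along this segment the coefficient $b(s,r)=\beta$ is constant, so the longitudinal system \eqref{eq:longitudinal-matrix} at this radius is a constant-coefficient linear ODE. The plan is to solve it explicitly by diagonalizing the symmetric matrix
\begin{equation*}
 A\bydef\begin{pmatrix}\beta^2&\beta c\\ \beta c&c^2\end{pmatrix}
 =\begin{pmatrix}\beta\\ c\end{pmatrix}\begin{pmatrix}\beta& c\end{pmatrix}.
\end{equation*}
This is a rank-one projection up to scaling: $A^2=(c^2+\beta^2)A$. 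The exponential therefore has the closed form
\begin{equation*}
 e^{-\sigma tA}=\operatorname{Id}-\frac{1-e^{-\sigma(c^2+\beta^2)t}}{c^2+\beta^2}\,A ,
\end{equation*}
which I will verify either from the power series, using $A^k=(c^2+\beta^2)^{k-1}A$, or by checking the kernel direction $(c,-\beta)$ and the eigen-direction $(\beta,c)$.

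Next I apply this to the initial vector $(V(0,r),R_0(r))=(0,R_0(r))$. Since $A(0,R_0)^{\top}=R_0\,(\beta c,c^2)^{\top}$, the first component gives
\begin{equation*}
 V(t,r)=-\frac{\beta c}{c^2+\beta^2}\bigl(1-e^{-\sigma(c^2+\beta^2)t}\bigr)R_0(r),
\end{equation*}
which is the claimed formula. As a byproduct, $R(t,r)=R_0(r)\bigl(1-\tfrac{c^2}{c^2+\beta^2}(1-e^{-\sigma(c^2+\beta^2)t})\bigr)$. An equivalent route that avoids matrix exponentials is to set $Z\bydef\beta V+cR$. Then $\partial_tZ=-\sigma(\beta^2+c^2)Z$, so $Z=cR_0e^{-\sigma(c^2+\beta^2)t}$, and $\partial_tV=-\sigma\beta Z$ integrates directly with $V(0)=0$. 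I will present this second route, since it is a two-line computation.

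The only point needing care is justification rather than computation. The ODE in Lemma~\ref{lem:global-radial} is pointwise in $r$ with coefficients depending on $b(s,r)$ only at that same radius. Uniqueness of the smooth solution of this linear ODE, on the time interval where the segment stays in $\Lambda_c$, therefore identifies the solution with the explicit one. No spatial derivatives enter, so the hypothesis on the vertical segment is exactly what is needed. I expect no real obstacle here.
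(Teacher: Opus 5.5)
Your proof is correct and follows essentially the same route as the paper. The paper also introduces $q=cR+\beta V$, solves $\partial_t q=-\sigma(c^2+\beta^2)q$ with $q(0,r)=cR_0(r)$, and integrates $\partial_tV=-\sigma\beta q$ from $V(0,r)=0$. Your justification that only the vertical segment at the fixed radius matters, since the ODE has no spatial derivatives, is stated a bit more carefully than in the paper.
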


\begin{proof}
Setting
\begin{equation*}
	q\bydef cR+\beta V,
\end{equation*}
we obtain, similarly to \eqref{eq:pointwise-dissipation}, that $q$ is governed by the equation
\begin{equation*}
	\partial_tq=-\sigma(c^2+\beta^2)q,
 \qquad q(0,r)=cR_0(r),
\end{equation*}
for all $(t,r)$ in the spacetime region $\Lambda_{c}$ where $b(t,r)=\beta$.
Thus, solving this ODE yields that 
\begin{equation*}
	q(t,r)=cR_0(r)e^{-\sigma(c^2+\beta^2)t}, \quad \text{for all } (t,r) \in \Lambda_{c}.
\end{equation*}
Since $\partial_tV=-\sigma\beta q$ and $V(0,r)=0$, integrating the expression of $q$ in time inside the region $\Lambda_{c}$  finally leads to the explicit formula for $V$.
\end{proof}

\section{Norm inflation for the charged system}\label{sec:proof:thm2}

This section is exclusively devoted to the proof of Theorem \ref{thm:main:2}.
\begin{proof}[Proof of Theorem \ref{thm:main:2}]

Let   $\Phi\in C_c^\infty(\mathbb R^2)$  be a radial function satisfying
\begin{equation*}
 \Phi(x)=\frac{|x|^2}{2}\quad\text{on }\mathcal  B_1\qquad \text{ and}
 \qquad
 \supp\Phi\subset \mathcal  B_2,
\end{equation*}
 and  let $F$ denote its gradient, i.e., 
\begin{equation*}
 F\bydef \nabla\Phi.
\end{equation*}
Then $F$ is a smooth, compactly supported radial vector field, i.e., a radial function pointing in the radial direction. Moreover, it is curl-free, and satisfies that
\begin{equation}\label{eq:F-affine}
 F(x)=x\quad \text{ and } \quad  \diver F(x)=2,\qquad \text{for all }x\in \mathcal B_1.
\end{equation}
We further fix another radial cutoff $\rho\in C_c^\infty(\mathbb R^2)$ such that
\begin{equation}\label{eq:rho}
 \rho=1\quad\text{on }\mathcal  B_4.
\end{equation}

We give a construction of   initial data that exhibits the failure of the embedding $\dot H^s(\mathbb R^2)$ in $\dot W^{1,\infty}(\mathbb R^2)$, for $s<2$, justifying along the way our claim in second remark after the statement of Theorem \ref{thm:main:2}. For the precise statement of that theorem, one can simply take $s=\nicefrac 32$, below. So, we fix $s\in [\nicefrac 32,2)$, and set  $\delta=\frac{2-s}{4}\in (0,\nicefrac 18]$. 
Accordingly, and for every $N\ge1$, we define 
\begin{equation}\label{eq:parameters}
 \alpha_N=N^{-(s-1+\delta)},
\qquad
\varepsilon_N =t_N=N^{-\delta},
\end{equation}
and set
\begin{equation}\label{eq:data}
 u_{0,N}=0,
 \qquad
 E_{0,N} =\alpha_NF(N\cdot),
 \qquad
 B_{0,N} =\varepsilon_N\rho(\cdot)e_3.
\end{equation}
A direct scaling argument implies that 
\begin{equation}\label{eq:E-H32}
 \norm{E_{0,N}}{H^s}
 \leq \alpha_NN^{s-1}\norm{F}{H^s}
 =\varepsilon_N\norm{F}{H^{s}},
\end{equation}
while
\begin{equation}\label{eq:B-H32}
 \norm{B_{0,N}}{H^{s}} 
 =\varepsilon_N\norm{\rho}{H^{s}},
\end{equation}
thereby yielding an initial electromagnetic fields whose $H^{s}$ norm is of size $O(\varepsilon_N)$.
Additionally, \eqref{eq:F-affine} entails that
\begin{equation}\label{eq:E-linear}
 (E_{0,N})|_{\mathcal B_{1/N}}=\alpha_NN \textnormal{Id}  
\end{equation}
and, consequently, that
\begin{equation}\label{eq:div-large}
 \diver E_{0,N}(0)=2\alpha_NN
 =2N^{2-s-\delta}=2N^{3\delta}.
\end{equation}

Now, observe that the initial data given by \eqref{eq:data} have the radial form
\eqref{eq:radial-ansatz}.  More precisely, in the notation of \eqref{eq:radial-ansatz}, the initial data satisfy that
\begin{equation*}
	V_N(0,r)=0,\qquad E_{0,N}(x)=R_N(0,r)e_r, \qquad S_N(0,r)=0.
\end{equation*}
  Lemma \ref{lem:global-radial} therefore produces a global smooth solution of \eqref{eq:radial-system} associated to this initial data. The small size of this initial data in the Sobolev space $H^{s}$, for all $s\in [\nicefrac32, 2)$, is ensured by \eqref{eq:E-H32}--\eqref{eq:B-H32}, while  \eqref{eq:div-large} is precisely \eqref{eq:charge-size} (in the particular case $s=\nicefrac 32$).

 Since $\rho=1$ on $B_4$ and $S_N(0,r)=0$ we deduce, by virtue of Corollary \ref{cor:plateau},  that 
\begin{equation}\label{eq:constant-on-support}
 b_N(t,r)=\varepsilon_N,\qquad S_N(t,r)=0,
\end{equation}
as long as  $r+ct<4$. 
Moreover, $\operatorname{supp}F\subset \mathcal B_2$ implies that $\operatorname{supp}R_N(0,\cdot)\subset\mathcal B_{\nicefrac 2N}$. Recalling, furthermore, that our observation time is defined as $t_N=N^{-\delta } \ll1$, for a sufficiently large $N$, makes the condition  $2/N+ct_N<4$ always satisfied.  Hence, it follows that \eqref{eq:constant-on-support} holds throughout
$[0,t_N]\times\supp R_N(0,\cdot)$.  The support assertion in
Lemma \ref{lem:global-radial} shows that no longitudinal component can appear
outside this set.  
Thus, applying Corollary \ref{cor:explicit} yields that 
\begin{equation}\label{eq:VN-formula}
 V_N(t,r)
 =-K_N(t)R_N(0,r),
\end{equation}
where
\begin{equation}\label{eq:KN}
 K_N(t)=
 \frac{\varepsilon_Nc}{c^2+\varepsilon_N^2}
 \left(1-e^{-\sigma(c^2+\varepsilon_N^2)t}\right).
\end{equation}
On the other hand, owing  to \eqref{eq:E-linear},
\begin{equation*}
	R_N(0,r)=\alpha_NNr,\quad \text{ for all }0\leq r<\frac1N.
\end{equation*}
Consequently, \eqref{eq:radial-vorticity} and
\eqref{eq:VN-formula} yield the exact identity
\begin{equation}\label{eq:omega-exact}
 \omega_N(t,0)=-2K_N(t)\alpha_NN.
\end{equation}

Notice next that, as soon as $0\leq z\le1$, one has that $1-e^{-z}\geq  z/2$, and, since
\begin{equation*}
	\sigma(c^2+\varepsilon_N^2)t_N\longrightarrow0, \text{ as } N\to \infty,
\end{equation*}
this inequality applies at $t=t_N$ for all sufficiently large $N$.  All in all, we deduce from 
\eqref{eq:KN} and \eqref{eq:omega-exact} that 
\begin{equation*}
	\begin{aligned}
		 |\omega_N(t_N,0)|
 &\ge
 2\frac{\varepsilon_Nc}{c^2+\varepsilon_N^2}
 \frac{\sigma(c^2+\varepsilon_N^2)t_N}{2}\alpha_NN\notag\\
 &=\sigma c\,\varepsilon_N\alpha_NNt_N.
	\end{aligned}
\end{equation*}
Substituting the values of the parameters from  \eqref{eq:parameters}, alongside the definition of $t_N=N^{-\delta}$, gives that 
\begin{equation*}
 |\omega_N(t_N,0)|
 \geq  \sigma c\,N^{\delta},
\end{equation*}
whereby \eqref{eq:inflation} holds for the constant $c_*=\sigma c$.

To conclude, it remains to show that the velocity itself stays small.  To that end, owing to the elementary inequality
$1-e^{-z}\leq z$, which is valid  for all $z\ge0$, \eqref{eq:KN} implies that 
\begin{equation*}
	0\leq K_N(t)\le\sigma\varepsilon_Nct,
 \quad \text{ for all } 0\leq t\leq t_N.
\end{equation*}
Employing this bound to control the velocity component in  \eqref{eq:VN-formula} yields that 
\begin{align*}
 \sup_{0\leq t\leq t_N}\norm{u_N(t)}{L^\infty}
 &\leq \sigma c\,\varepsilon_Nt_N\norm{R_N(0)}{L^\infty}\\
 &\lesssim \varepsilon_Nt_N\alpha_N = N^{\delta-1} \ll 1 ,  
\end{align*}
which yields the $L^\infty_x$ bound claimed in \eqref{eq:u-small} (where the particular case there corresponds to the value $\delta=\nicefrac 18$,  equivalently $s=\nicefrac 32$ in this proof). This completes the proof.
\end{proof}

\section*{Endnote}

 The mechanism underlying local endpoint well-posedness for \eqref{eq:EM-intro} can be summarized in one sentence: the normal-structure product
law propagates the $H^{\nicefrac 32}$ regularity of the Maxwell field, while strict sub-luminality converts this regularity into an $L^2_t$ trace estimate for the magnetic gradient along every fluid trajectory. This is precisely the control required in the vorticity equation to propagate the initial $L^\infty_x$ bound. The resulting argument establishes local well-posedness for charge-free projected data in $H^{\nicefrac 32}$, without requiring the electromagnetic field to be Lipschitz in the Eulerian variables.

For the charged, unprojected system \eqref{eq:unprojected-system}, by contrast, the established $L^\infty_x$ norm inflation of the vorticity is driven by the charge density $\diver E$. At the level of the endpoint estimates, this instability reflects the failure of the Sobolev embedding
\begin{equation*}
	  \dot H^s(\mathbb R^2)
    \not\hookrightarrow
    \dot W^{1,\infty}(\mathbb R^2), \quad s\leq 2.
\end{equation*}

Thus, as long as the longitudinal charge mode is excluded from the phase space, that is, as long as $\diver E=0$, the Euler--Maxwell system is locally well-posed  for vorticities in the Yudovich class and electromagnetic fields with at least $H^{\nicefrac 32}$ regularity. When the charge density is present, that is, when $\diver E\neq0$, it can instead generate an instantaneous instability of the vorticity in the Yudovich class, even for electromagnetic fields possessing substantial Sobolev regularity. These conclusions therefore lie on opposite sides of a structural boundary: they concern different phase spaces, rather than conflicting regularity statements for the same Cauchy problem.
\appendix

\section{Sharpness of the time-like half-wave trace} 
For completeness, we discuss in the next proposition the sharpness of the Sobolev space $\dot H^{\frac{3}{2}}(\mathbb R^2)$ that appears in the statement of Proposition \ref{prop:trace}. In particular, the exponent $\nicefrac 32$ in \eqref{eq:timelike-trace}  cannot be lowered,
even when the Fourier multiplier $m\equiv1$, the trajectory is the constant curve $X(t)=0$, and the time interval is fixed. This can further be generalized {\em mutatis mutandis} to a similar statement in any dimension $d\geq 1$ by substituting the Sobolev space above with  $\dot H^{\frac{d}{2}+\frac{1}{2}}(\mathbb R^d)$.

\begin{proposition}[Sharpness for uniform trajectory traces]\label{prop:sharp-trace} 
Let $I\subset\mathbb{R}$ be an interval with nonempty interior and let $c>0$ be fixed. For every $s<\nicefrac 32$, it holds that
\begin{equation*}
 \sup_{ \overset{f\in\mathcal{S}(\mathbb{R}^{2})}{\norm {f}{\dot H^s}\neq 0}}
 \frac{ \left\| \left.\nabla e^{\pm  ict|D|}f\right|_{x=0}
 \right\|_{L^{2}_{t}(I)}}{ \|f\|_{\dot H^{s}(\mathbb{R}^{2})}} =\infty.
\end{equation*} 
\end{proposition}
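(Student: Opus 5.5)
The plan is a Knapp-type (or frequency-localized radial) counterexample at the origin. Fix $s<\nicefrac32$ and the interval $I$; after a translation in time (which only replaces $f$ by $e^{\pm ict_0|D|}f$, an isometry on every $\dot H^s$) we may assume $I\supset[0,\tau]$ for some $\tau>0$. We will test the quotient on a family $f_\lambda$ concentrated at frequency $|\xi|\sim\lambda\to\infty$, and show that the numerator behaves like $\lambda^{3/2}$ times a fixed positive constant while $\|f_\lambda\|_{\dot H^s}\sim\lambda^{s}$. The quotient then grows like $\lambda^{3/2-s}\to\infty$.

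\textbf{Choice of the family.} We take $\widehat{f_\lambda}(\xi)=\chi(|\xi|/\lambda)\,\psi(\xi/|\xi|)$, where $\chi\in C_c^\infty(1,2)$ is nonnegative and not identically zero, and $\psi$ is a smooth angular weight on $\mathbb S^1$. We choose $\psi(\theta)=\theta_1$, so that the $\theta$-integral against the factor $\theta$ in the gradient does not vanish; a purely radial $\psi$ would kill $\nabla$ at $x=0$. Each $f_\lambda$ is Schwartz, and polar coordinates give
\begin{equation*}
\|f_\lambda\|_{\dot H^s}^2=\int|\xi|^{2s}|\widehat{f_\lambda}|^2d\xi\simeq\lambda^{2s+2}.
\end{equation*}

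\textbf{Computing the trace.} By the polar formula used in the proof of Proposition \ref{prop:trace}, with $X\equiv0$ and $m\equiv1$, the first component of the gradient at the origin is
\begin{equation*}
\partial_1 e^{\pm ict|D|}f_\lambda(0)=C\int_{\mathbb S^1}\theta_1^2\,d\theta\int_0^\infty e^{\pm icrt}\,ir^2\chi(r/\lambda)\,dr=C'\lambda^3\,g(\pm c\lambda t),
\qquad g(\tau')\bydef\int_0^\infty e^{i\rho\tau'}\rho^2\chi(\rho)\,d\rho .
\end{equation*}
Here $g$ is a fixed Schwartz function of one variable, and it is not identically zero because its inverse Fourier transform is $\rho^2\chi(\rho)\neq0$. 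Hence
\begin{equation*}
\int_0^\tau|\partial_1 e^{\pm ict|D|}f_\lambda(0)|^2dt=C\lambda^6\int_0^\tau|g(c\lambda t)|^2dt=C\lambda^5c^{-1}\int_0^{c\lambda\tau}|g|^2 \;\geq\; C\lambda^5\int_0^{c\tau}|g|^2
\end{equation*}
for $\lambda\ge1$. We must make sure that $g$ does not vanish on $[0,\infty)$. Since $g$ is analytic (its Fourier transform is compactly supported), $g$ is nonzero on $(0,c\tau)$ as soon as it is not identically zero, so the last integral is a positive constant independent of $\lambda$. Consequently the numerator is at least $c_1\lambda^{5/2}$.

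\textbf{Conclusion.} The quotient is then at least $c_1\lambda^{5/2}/(C\lambda^{s+1})=c_2\lambda^{3/2-s}$, which tends to $\infty$ as $\lambda\to\infty$ because $s<\nicefrac32$. The only points requiring care are the ones already handled above: the nonvanishing of the angular integral, for which we chose $\psi=\theta_1$ rather than a radial weight, and the fact that the time interval, once rescaled by $\lambda$, captures a fixed positive portion of $\|g\|_{L^2}^2$, which follows from analyticity. The argument adapts to dimension $d$ with $\lambda^{d+1}$ in place of $\lambda^3$, which gives the threshold $\tfrac d2+\tfrac12$.
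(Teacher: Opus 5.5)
Your proposal is correct and follows essentially the paper's argument. It uses the same test functions $\widehat f(r\theta)=(\text{phase})\,\chi(r/\lambda)\,\theta_1$, where your time translation plays the role of the paper's factor $e^{\mp ict_0 r}$, the same identity $\partial_1 e^{\pm ict|D|}f_\lambda(0)=C\lambda^3 g(\pm c\lambda t)$, and the same count $\lambda^{5/2}/\lambda^{s+1}=\lambda^{3/2-s}$.

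The differences are cosmetic. You use a one-sided window $[0,\tau]$ and analyticity of $g$ to get $\int_0^{c\tau}|g|^2>0$, whereas the paper centers the window at an interior point $t_0$, so that the rescaled window exhausts $\mathbb R$ and captures half of $\|H\|_{L^2}^2$ by Plancherel. Your introductory normalization "$\lambda^{3/2}$ versus $\lambda^{s}$" should read $\lambda^{5/2}$ versus $\lambda^{s+1}$, as in your actual computation.
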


\begin{proof}
Fix $t_{0}\in\operatorname{int}(I)$, and choose $\eta>0$ such that $[t_{0}-\eta,t_{0}+\eta]\subset I$. We further fix a non-trivial smooth function $ \rho\in C_{c}^{\infty}((1,2))$, and, for all
$N\geq1$, define
\begin{equation}\label{eq:sharp-trace-data}
 \widehat f_{N}(r\theta) = e^{\mp   ict_{0}r}\rho(r/N)\theta_1,
 \qquad r>0,\quad \theta=(\theta_1,\theta_2)\in\mathbb{S}^{1}.
\end{equation}
Since $\widehat f_{N}$ is smooth and compactly supported in the
annulus $\{N<|\xi|<2N\}$, it is readily seen that   $f_{N}\in\mathcal{S}(\mathbb{R}^{2})$.  
Now, computing the polar Fourier inversion at $x=0$ yields that 
\begin{equation*} 
 \left(\partial_{1}e^{\pm ict|D|}f_{N}\right)(0)=iC_{\mathcal F}A 
 \int_{0}^{\infty}
 e^{\pm  ic(t-t_{0})r}r^{2}\rho\left(\frac{r}{N}\right)\,dr,
\end{equation*}
where $C_{\mathcal F}>0$ is an absolute constant that depends only on the Fourier-transform
normalization, and
\begin{equation*}
 A 
 \bydef  
 \int_{\mathbb{S}^{1}}\theta_{1}^{2}\,d\theta
 >0.
\end{equation*}  
Next, making the change of variables $r=Nq$ yields that 
\begin{equation*}
 \left(\partial_{1}e^{\pm ict|D|}f_{N}\right)(0)
 \sim i  N^{3}
 H\bigl(\pm  cN(t-t_{0})\bigr), 
\end{equation*}
where  
\begin{equation*}
 H(\tau)\bydef 
 \int_{0}^{\infty}e^{i\tau q}q^{2}\rho(q) dq.
\end{equation*}
It then follows that
\begin{equation*}
	\left\|
 \left.\partial_{1}e^{\pm  ict|D|}f_{N}
 \right|_{x=0}
 \right\|_{L^{2}_{t}(I)}^{2} \sim N^{5}
 \int_{\pm  cN(I-t_{0})}|H(\tau)|^{2}\,d\tau.
\end{equation*} 
Observe that the integration domain on the right-hand side contains
$[-cN\eta,cN\eta]$. Moreover, $H\in L^{2}(\mathbb{R})$ and
$H\not\equiv0$, by the one-dimensional Plancherel theorem. Hence,
for all sufficiently large $N$, one finds that
\begin{equation*}
 \int_{\pm cN(I-t_{0})}|H(\tau)|^{2}\,d\tau
 \geq
 \int_{-cN\eta}^{cN\eta}|H(\tau)|^{2}\,d\tau
 \geq
 \frac12\|H\|_{L^{2}(\mathbb{R})}^{2}.
\end{equation*}
Together with the corresponding   upper bound, we deduce that 
\begin{equation*} 
 \left\|
 \left.\partial_{1}e^{\pm  ict|D|}f_{N}
 \right|_{x=0}
 \right\|_{L^{2}_{t}(I)}
 \sim  N^{5/2}.
\end{equation*}

On the other hand, it is readily seen from
\eqref{eq:sharp-trace-data} that 
\begin{equation*}
 \begin{aligned}
 \|f_{N}\|_{\dot H^{s}}^{2}
 &\sim 
 A
 \int_{0}^{\infty}
 r^{2s+1}\left|\rho\left(\frac{r}{N}\right)\right|^{2}\,dr
 \\
 &= A
 N^{2s+2}
 \int_{1}^{2}q^{2s+1}|\rho(q)|^{2}\,dq
 \sim  N^{2s+2}.
 \end{aligned} 
\end{equation*}
All in all, putting the last two identities together yields the lower bound
\begin{equation*}
 \frac{
 \left\|
 \left.\nabla e^{\pm  ict|D|}f_{N}\right|_{x=0}
 \right\|_{L^{2}_{t}(I)}
 }{
 \|f_{N}\|_{\dot H^{s}}
 }
 \gtrsim N^{3/2-s},
\end{equation*}
which exhibits the claimed  growth as $N$ tends to infinity, whenever  $s<\nicefrac 32$. This completes the
proof.
\end{proof}

\section*{Acknowledgement} 

The author acknowledges support from the  BAGEP Award of the Science Academy, Turkey.

\bibliographystyle{plain}
\bibliography{references.bib}
 
\end{document}